\newtheorem{theorem}{Theorem}[section]
\newtheorem{corollary}[theorem]{Corollary}
\newtheorem{proposition}[theorem]{Proposition}
\newtheorem{conjecture}[theorem]{Conjecture}
\newtheorem{obs}[theorem]{Observation}
\newtheorem{problem}{Problem}
\newtheorem{remark}[theorem]{Remark}
\def\cp{\,\square\,}
\DeclareMathOperator{\diam}{diam}
\DeclareMathOperator{\rad}{rad}
\DeclareMathOperator{\ecc}{ecc}
\tikzstyle{vertex}=[circle, draw, inner sep=0pt, minimum size=6pt]
\begin{document}

\title{On $d$-distance $p$-packing domination number in strong products}

\author{
Csilla Bujt\'as $^{a,b,}$\thanks{Email: \texttt{csilla.bujtas@fmf.uni-lj.si}}
\and
Vesna Ir\v si\v c Chenoweth $^{a,b,}$\thanks{Email: \texttt{vesna.irsic@fmf.uni-lj.si}}
\and
Sandi Klav\v zar $^{a,b,c,}$\thanks{Email: \texttt{sandi.klavzar@fmf.uni-lj.si}}
\and
Gang Zhang $^{b,d,}$\thanks{Email: \texttt{gzhang@stu.xmu.edu.cn}}
}

\maketitle

\begin{center}
$^a$ Faculty of Mathematics and Physics, University of Ljubljana, Slovenia\\
\medskip

$^b$ Institute of Mathematics, Physics and Mechanics, Ljubljana, Slovenia\\
\medskip

$^c$ Faculty of Natural Sciences and Mathematics, University of Maribor, Slovenia\\
\medskip

$^d$ School of Mathematical Sciences, Xiamen University, China\\
\medskip
\end{center}

\begin{abstract}
The $d$-distance $p$-packing domination number $\gamma_d^p(G)$ of a graph $G$ is the cardinality of a smallest set of vertices of $G$ which is both a $d$-distance dominating set and a $p$-packing. If no such set exists, then we set $\gamma_d^p(G) = \infty$. For an arbitrary strong product $G\boxtimes H$ it is proved that $\gamma_d^p(G\boxtimes H) \le \gamma_d^p(G) \gamma_d^p(H)$. By proving that $\gamma_d^p(P_m \boxtimes P_n) = \left \lceil \frac{m}{2d+1} \right \rceil \left \lceil \frac{n}{2d+1} \right \rceil$, and that if $\gamma_d^p(C_n) < \infty$, then $\gamma_d^p(P_m \boxtimes C_n) = \left \lceil \frac{m}{2d+1} \right \rceil \left \lceil \frac{n}{2d+1} \right \rceil$, the sharpness of the upper bound is demonstrated. On the other hand, infinite families of strong toruses are presented for which the strict inequality holds. For instance, we present strong toruses with difference $2$ and demonstrate that the difference can be arbitrarily large if only one factor is a cycle. It is also conjectured that if $\gamma_d^p(G) = \infty$, then $\gamma_d^p(G\boxtimes H) = \infty$ for every graph $H$. Several results are proved which support the conjecture, in particular, if $\gamma_d^p(C_m)= \infty$, then $\gamma_d^p(C_m \boxtimes C_n)=\infty$.  
\end{abstract}

\medskip\noindent
{\bf Keywords:} $d$-distance dominating set; $p$-packing set; strong product of graphs

\medskip\noindent
{\bf AMS Subj.\ Class.\ (2020):}  05C69, 05C76

\section{Introduction}
\label{sec:intro}

Let $G = (V(G), E(G))$ be a graph, $d$ and $p$ nonnegative integers, and $S\subseteq V(G)$. The set $S$ is said to be a {\em $d$-distance dominating set} of $G$ if for every vertex $u\in V(G)\setminus S$ there exists a vertex $x\in S$ such that $d_G(u,x) \le d$, and $S$ is said to be a {\em $p$-packing} of $G$ if $d_G(x,y) \ge p+1$ for every different vertices $x,y\in S$. As usual, here and later $d_G(x,y)$ denotes the shortest-path distance between $x$ and $y$. The cardinality of a smallest set $S$ which is both a $d$-distance dominating set and a $p$-packing is the \emph{$d$-distance $p$-packing domination number} $\gamma_d^p(G)$ of $G$. If $d <p$, it might happen that no such set exists. In this case, we set $\gamma_d^p(G) = \infty$. A smallest $d$-distance $p$-packing dominating set will be briefly called a {\em $\gamma_d^p$-set}. We may also briefly call a $d$-distance $p$-packing dominating set a {\em $d$-distance $p$-packing set}.

The $d$-distance $p$-packing domination number was introduced by Beineke and Henning~\cite{Beineke-1994}. At that time, in 1994, they suggested to denote it by $i_{p,d}(G)$ and call it the $(p,d)$-domination number. To place this general concept within the trends of the contemporary graph theory, our present terminology and notation was suggested in~\cite{BIKZ-2025a}. This approach broadly generalizes various concepts. One of them is the concept of perfect $d$-codes which we will discuss in Section~\ref{sec:conjecture}. Moreover,  $\gamma_1^0 = \gamma$ is the usual domination number, $\gamma_d^0 = \gamma_d$ is the $d$-distance domination number~\cite{henning-2020}, $\gamma_1^1$ is the independent domination number~\cite{Haynes-2023}, $\gamma_d^1$ is the $d$-distance independent domination number~\cite{Gimbel-1996, henning-2020}, $\gamma_2^2$ is the lower packing number~\cite{Henning-1998-1}, and $\gamma_d^d(G)$ is the $d$-independent $d$-domination number~\cite{Henning1991}. 

Let's quickly summarize what has been done recently in~\cite{BIKZ-2025a, BIKZ-2025b}. In the first paper it was shown that for every fixed $d$ and $p$, where $2 \le d$ and $0 \le p \leq 2d-1$, the decision problem whether $\gamma_d^p(G) \leq k$ holds is NP-complete for bipartite planar graphs. For cycles $C_n$, the value $\gamma_d^p(C_n)$ was determined in all cases, while for trees $T$, (in most cases sharp) lower and upper bounds proved for $\gamma_2^0(T)$, $\gamma_2^2(T)$, and $\gamma_d^2(T)$, $d\ge 2$. In~\cite{BIKZ-2025b} it was established that $\gamma_d^1(G) \leq \frac{n}{d+1}$ holds for any bipartite graph $G$ of order $n \geq d+1 \ge 2$, which proves and extends Beineke and Henning's conjecture from 1994. Moreover, for trees $T$ with $\ell$ leaves, it has been proved that $\gamma_d^1(T) \leq \frac{n-\ell}{d}$ and $\gamma_d^1(T) \leq \frac{n+\ell}{d+2}$, where the latter inequality extends Favaron's theorem from 1992~\cite{Favaron-1992} asserting that $\gamma_1^1(T) \leq \frac{n+\ell}{3}$. 

The strong product is one of four standard graph products and is the subject of constant interest. Among recent studies of this product, we mention the papers dealing with coloring aspects~\cite{babu-2024, enomoto-2023, esperet-2024}, the bootstrap percolation~\cite{bresar-2024}, the vertex forwarding index~\cite{qian-2025}, and the clique immersion~\cite{wu-2025}. The independent domination number of the strong product of cycles was studied in~\cite{yang-2018}. Recall that the {\em strong product} $G\boxtimes H$ of graphs $G$ and $H$ has vertex set $V(G)\times V(H)$, while vertices $(g,h)$ and $(g',h')$ are adjacent if one of the following three conditions holds: (i) $gg'\in E(G)$ and $h=h'$, (ii) $g=g'$ and $hh'\in E(H)$, (iii) $gg'\in E(G)$ and $hh'\in E(H)$. If $g\in V(G)$, then the subgraph of $G\boxtimes H$ induced by $\{g\}\times V(H)$ is isomorphic to $H$ and denoted by $^gH$. Similarly, the subgraph induced by $V(G)\times \{h \}$ is isomorphic to $G$ and denoted by $G^h$. These subgraphs $^gH$ and $G^h$ are called {\em layers} of $G\boxtimes H$. By a {\em strong grid} we mean the strong product of two paths, by a {\em strong prism} the strong product of a path by a cycle, and by a {\em strong torus} the strong product of two cycles. The key property of the strong product for us is that $d_{G\boxtimes H}((g,h), (g',h')) = \max\{d_G(g,g'), d_H(h,h')\}$, cf.~\cite[Proposition 5.4]{HIK-2011}. Another basic property of the strong product is that the operation is commutative, hence in all our results the role of factors can be interchanged. For additional information on the many aspects of the strong product we refer to the book~\cite{HIK-2011}.

In this paper we investigate the $d$-distance $p$-packing domination number of the strong product of graphs. In the next section we give additional definitions, recall needed results, and prove that $\gamma_d^p(G\boxtimes H) \le \gamma_d^p(G) \gamma_d^p(H)$ holds for any graphs $G$ and $H$. In Section~\ref{sec:conjecture} we conjecture that if $\gamma_d^p(G) = \infty$, then $\gamma_d^p(G\boxtimes H) = \infty$ for every graph $H$. Several results are proved which support the conjecture. In particular, the conjecture holds true if $H$ contains a so-called $(d,p)$-close vertex, which represents a new concept that can receive independent attention. Also, we prove that if $\gamma_d^p(C_m)= \infty$, then $\gamma_d^p(C_m \boxtimes C_n)=\infty$. In Section~\ref{sec:grids-prisms} we demonstrate that $\gamma_d^p(P_m \boxtimes P_n) = \left \lceil \frac{m}{2d+1} \right \rceil \left \lceil \frac{n}{2d+1} \right \rceil$, and that if $\gamma_d^p(C_n) < \infty$, then $\gamma_d^p(P_m \boxtimes C_n) = \left \lceil \frac{m}{2d+1} \right \rceil \left \lceil \frac{n}{2d+1} \right \rceil$. These results show that the above general upper bound is sharp. On the other hand, in Section~\ref{sec:toruses}, infinite families of strong toruses are presented for which the strict inequality holds. In particular, if $t= 0$ or $t\ge 2$, and $d = \left\lfloor 5(t+1)/2\right\rfloor$, $p=3t+2$, then $\gamma_{d}^{p} \left(C_{11(t+1)}\boxtimes C_{11(t+1)}\right) \le  \gamma_{d}^{p}\left(C_{11(t+1)}\right) \gamma_{d}^{p}\left(C_{11(t+1)}\right) -2$. But certain strong toruses also reach the upper bound, for example, 
if $n\equiv 0 \pmod{2d+1}$, then $ \gamma_d^p(C_m \boxtimes C_n) = \gamma_d^p(C_m)\, \gamma_d^p(C_n)$. In the concluding section we present another general non-sharpness example of the general upper bound, pose some problems, and show that our findings imply that most of the main results from~\cite{anand-2025} are fundamentally wrong. 

\section{Preliminaries and a general bound}
\label{sec:prelim}

In this section, we first provide additional definitions needed in this article. After that we recall some earlier results that will be useful later and give an upper bound on the $d$-distance $p$-packing domination number of arbitrary strong products. 

For two integers $a \le b$, the interval of integers is defined and denoted as $[a,b]=\{a, \dots, b\}$. If $a=b$, then $[a,b] =\{a\}$. The interval $[1,a]$ will be abbreviated to $[a]$.

Let $G$ be a connected graph. The {\em eccentricity} $\ecc_G(u)$ of the vertex $u\in V(G)$ is the maximum distance between $u$ and the remaining vertices of $G$. The minimum and the maximum eccentricity of the vertices of $G$ are, respectively, the {\em radius} $\rad(G)$ and the {\em diameter} $\diam(G)$ of $G$. A {\em pendant path of order $k$} in $G$ is a path subgraph $v_1\dots v_k$ so that $\deg_G(v_1)=1$ and $\deg_G(v_i)=2$ for $i \in [2,k]$. If $u\in V(G)$ and $d\ge 1$, then by $N_G^d[u]$ we denote the $d$-neighborhood of $u$ in $G$, that is, the set of vertices which are at distance at most $d$ from $u$. Further, for $S\subseteq V(G)$ let $N_G^d[S] = \bigcup_{u\in S} N_G^d[u]$.  Unless stated otherwise, we will assume that $V(P_n) = V(C_n) = [n]$, with the natural adjacency. 

From the stock of known facts, let us first recall the following. 

\begin{proposition} {\rm \cite[Proposition 1.1]{BIKZ-2025a}}
\label{prop:lower-bound-k-domination}
If $0 \leq d'\leq d$ and $0\le p\le p'$, then $\gamma_{d'}^{p'}(G) \geq \gamma_d^p(G)$.
\end{proposition}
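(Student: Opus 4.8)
The plan is to show that every $d'$-distance $p'$-packing dominating set of $G$ is automatically also a $d$-distance $p$-packing dominating set of $G$; the claimed inequality between the optimal cardinalities then follows at once, because the minimum of a set size taken over a larger family can only be smaller.

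First I would unpack the two defining conditions for an arbitrary $S\subseteq V(G)$ that is both a $d'$-distance dominating set and a $p'$-packing. For the domination part: given $u\in V(G)\setminus S$, there is $x\in S$ with $d_G(u,x)\le d'$, and since $d'\le d$ this gives $d_G(u,x)\le d$, so $S$ is a $d$-distance dominating set. For the packing part: for any two distinct $x,y\in S$ we have $d_G(x,y)\ge p'+1\ge p+1$, so $S$ is also a $p$-packing. Hence $S$ is a $d$-distance $p$-packing dominating set.

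Next I would translate this observation into the stated inequality, taking the $\infty$ convention into account. If $\gamma_{d'}^{p'}(G)=\infty$, there is nothing to prove. Otherwise let $S$ be a $\gamma_{d'}^{p'}$-set; by the previous paragraph $S$ is a $d$-distance $p$-packing dominating set, so $\gamma_d^p(G)\le |S|=\gamma_{d'}^{p'}(G)$, which is exactly the desired bound.

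I do not expect any genuine obstacle here: the argument is essentially immediate from the definitions. The only two points that merit a moment of care are the convention $\gamma_d^p(G)=\infty$ when no admissible set exists (disposed of by the trivial case above) and keeping the direction of the monotonicity straight, namely that enlarging $d$ weakens the domination constraint while shrinking $p$ weakens the packing constraint, so both changes enlarge the family of feasible sets and therefore can only decrease the optimum.
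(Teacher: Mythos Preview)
Your proof is correct; this monotonicity follows immediately from the definitions exactly as you argue. The paper itself does not prove this proposition but merely cites it from~\cite{BIKZ-2025a}, so there is no in-paper proof to compare against; your argument is the standard direct one.
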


As it was explicitly or implicitly observed in previous papers, the definition of a $d$-distance $p$-packing dominating set implies the following properties. To be self-sufficient, we also provide arguments for them. 

\begin{obs} \label{obs:infinity}
    For every graph $G$ and every two nonnegative integers $d$ and $p$, the following statements are true.
    \begin{itemize}
        \item[(i)] If $\rad(G) \leq d$, then $\gamma_d^p(G) =1$.
        \item[(ii)] If $\rad(G) > d$ and $p \ge 2d+1$, then $\gamma_d^p(G) =\infty$.
        \item[(iii)] If $p \leq d$, then $\gamma_d^p(G) <\infty$.        
    \end{itemize}    
\end{obs}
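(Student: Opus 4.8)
The plan is to handle the three items separately, each by directly exhibiting a suitable set $S$ or by a short contradiction argument. For item (i), assume $\rad(G) \le d$, and pick a vertex $u$ with $\ecc_G(u) = \rad(G) \le d$. Then $S = \{u\}$ is a $d$-distance dominating set, since every other vertex lies within distance $d$ of $u$; and $S$ is trivially a $p$-packing, since the packing condition is vacuous for a single vertex. Hence $\gamma_d^p(G) \le 1$, and as $G$ is nonempty we get $\gamma_d^p(G) = 1$.

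For item (ii), suppose towards a contradiction that $\rad(G) > d$, $p \ge 2d+1$, and there exists a $d$-distance $p$-packing set $S$. Since $\rad(G) > d$, no single vertex $d$-dominates $G$, so $|S| \ge 2$; pick distinct $x, y \in S$. The packing condition gives $d_G(x,y) \ge p+1 \ge 2d+2$. First I would argue that $S$ contains at least one vertex at distance $\le d$ from every vertex of $G$ only collectively — but that is not the obstruction. The point is rather: take any vertex $z$ on a shortest $x$–$y$ path at distance exactly $d+1$ from $x$ (such a $z$ exists since $d_G(x,y) \ge 2d+2 > d+1$). I claim $z$ is not $d$-dominated by $S$. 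Indeed, for any $w \in S$, if $w = x$ then $d_G(z,w) = d+1 > d$; if $w = y$ then $d_G(z,w) = d_G(x,y) - (d+1) \ge (2d+2)-(d+1) = d+1 > d$; and if $w \notin \{x,y\}$, then by the triangle inequality $d_G(z,w) \ge d_G(x,w) - d_G(x,z) \ge (p+1) - (d+1) = p - d \ge (2d+1) - d = d+1 > d$, using the packing condition $d_G(x,w) \ge p+1$. Hence $z \notin N_G^d[S]$, and also $z \notin S$ (any $s \in S$ has $d_G(x,s) \ge p+1 > d+1$ unless $s = x$, but $d_G(x,z) = d+1 \ne 0$), contradicting that $S$ is $d$-distance dominating. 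Therefore no such $S$ exists and $\gamma_d^p(G) = \infty$.

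For item (iii), assume $p \le d$. Here I would build a $d$-distance $p$-packing set greedily: start with $S = \emptyset$ and, as long as there is a vertex of $G$ not in $N_G^d[S]$ and not in $S$, add to $S$ any vertex $v$ that is not $d$-dominated by $S$. Each newly added $v$ satisfies $d_G(v, s) \ge d+1 \ge p+1$ for all $s \in S$ already chosen (since $v \notin N_G^d[S]$), so $S$ remains a $p$-packing throughout. The process terminates because $V(G)$ is finite and $|S|$ strictly increases, and upon termination $N_G^d[S] \cup S = V(G)$, i.e.\ $S$ is a $d$-distance dominating set. Thus $\gamma_d^p(G) \le |V(G)| < \infty$.

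The only place requiring any care is item (ii): one must choose the "witness" vertex $z$ correctly (at distance exactly $d+1$ from one packing vertex along a shortest path between two packing vertices) and then verify via the triangle inequality, together with the packing hypothesis $p \ge 2d+1$, that \emph{every} vertex of $S$ — not just the two chosen ones — fails to $d$-dominate $z$. The other two items are essentially immediate.
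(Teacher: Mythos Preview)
Your proof is correct. Items (i) and (iii) are essentially identical to the paper's argument. For item (ii) you take a genuinely different route: you first force $|S|\ge 2$, pick two packing vertices $x,y$, place a witness $z$ at distance exactly $d+1$ from $x$ along a shortest $x$--$y$ path, and then use the triangle inequality together with the packing condition to show that \emph{every} vertex of $S$ fails to $d$-dominate $z$. The paper's argument is shorter and avoids the case analysis: it picks a single $x\in S$, uses $\rad(G)>d$ (hence $\ecc_G(x)>d$) to find any vertex $y$ with $d_G(x,y)=d+1$, and then observes that whichever $z\in S$ $d$-dominates $y$ must satisfy $d_G(x,z)\le d_G(x,y)+d_G(y,z)\le 2d+1\le p$, contradicting the packing condition directly. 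Your approach works just as well but requires checking all $w\in S$ separately; the paper's approach needs only one application of the triangle inequality and does not rely on $|S|\ge 2$ explicitly.
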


\begin{proof}
(i) A vertex $u$ with $\ecc(u) = \rad(G)$ forms a $d$-distance dominating set and a $p$-packing set. 

(ii) Suppose $S$ is a $d$-distance dominating set and a $p$-packing set. Let $x\in S$. Since $\rad(G) > d$, there exists a vertex $y$ with $d_G(x,y) = d+1$. As $S$ is a $d$-distance dominating set, there exists $z\in S$ such that $d_G(z,y) \le d$. But then $d_G(x,z)\le 2d+1$, a contradiction. 

(iii) First set $S = \{u\}$, where $u$ is an arbitrary vertex of $G$. Then, inductively, if there exists a vertex $x$ which is at distance at least $d+1$ to every vertex of $S$, then add $x$ to $S$. After no such addition is possible, the condition $p\le d$ implies that the constructed set is a $d$-distance dominating set and a $p$-packing set.
\end{proof}

For a path $P_n$, Observation~\ref{obs:infinity}~(i) and (ii) imply that $\gamma_d^p(P_n)=\infty$ when $p \ge 2d+1$ and $n \ge 2d+2$, while $\gamma_d^p(P_n) = 1$ when $n \le 2d+1$.  In the remaining cases we have the following result for paths.

\begin{proposition}  {\rm \cite[Proposition 3.3]{BIKZ-2025a}}
\label{prop:path}
    For every three integers $d$, $p$, and $n$ with $0 \le p \le 2d$, it holds that $\gamma_d^p(P_n)= \left\lceil\frac{n}{2d+1}\right\rceil$.
\end{proposition}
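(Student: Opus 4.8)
The plan is to prove the two bounds separately: the lower bound follows from a one-line counting argument, and the upper bound from an explicit periodic construction whose only subtlety lies in positioning the last block.

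For the lower bound I would use that any $d$-distance dominating set $S$ of $P_n$ already satisfies $|S|\ge \left\lceil\frac{n}{2d+1}\right\rceil$, irrespective of the packing condition. Indeed, $[n]=V(P_n)=N_{P_n}^d[S]=\bigcup_{x\in S}N_{P_n}^d[x]$, and since $|N_{P_n}^d[x]|\le 2d+1$ for every vertex $x$ of $P_n$, this gives $n\le (2d+1)|S|$, hence $|S|\ge \left\lceil\frac{n}{2d+1}\right\rceil$. (Alternatively, this is immediate from Proposition~\ref{prop:lower-bound-k-domination} and the classical value $\gamma_d(P_n)=\left\lceil\frac{n}{2d+1}\right\rceil$.) In particular $\gamma_d^p(P_n)\ge \left\lceil\frac{n}{2d+1}\right\rceil$.

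For the upper bound, set $k=\left\lceil\frac{n}{2d+1}\right\rceil$ and $m=n-(k-1)(2d+1)$, so that $1\le m\le 2d+1$, and consider the candidate set $S_s=\{\,s+i(2d+1):i\in[0,k-1]\,\}$ for a starting position $s$ still to be chosen in $[1,d+1]$. Two properties must be verified. First, $S_s$ is a $p$-packing: any two distinct elements differ by at least $2d+1\ge p+1$, using $p\le 2d$. Second, $S_s$ is a $d$-distance dominating set: the sets $N_{P_n}^d[s+i(2d+1)]$ are consecutive blocks of at most $2d+1$ integers that tile an interval of $[n]$, so $N_{P_n}^d[S_s]\supseteq [n]$ as soon as $s\le d+1$ (to reach vertex $1$), $s+(k-1)(2d+1)\le n$ (so the last center lies in $P_n$), and $s+(k-1)(2d+1)+d\ge n$ (to reach vertex $n$). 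Rewriting these three inequalities in terms of $m$, they amount to $\max\{1,\,m-d\}\le s\le \min\{d+1,\,m\}$, and a short check using $1\le m\le 2d+1$ shows this range is nonempty; choosing any such $s$ yields a set $S_s$ of size $|S_s|=k=\left\lceil\frac{n}{2d+1}\right\rceil$ that is both a $d$-distance dominating set and a $p$-packing, so $\gamma_d^p(P_n)\le \left\lceil\frac{n}{2d+1}\right\rceil$.

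The only genuinely delicate part is the bookkeeping in the last step: one has to confirm that the single ``boundary'' block of length $m$ can be slid into a position that simultaneously covers vertex $n$, keeps its center inside $[n]$, and stays at distance exactly $2d+1$ from the previous center. Phrasing all constraints through $m$ makes the nonemptiness of the admissible interval for $s$ transparent, and the same computation automatically absorbs the degenerate case $k=1$ (that is, $n\le 2d+1$), where the claimed value is $1$ in agreement with Observation~\ref{obs:infinity}(i).
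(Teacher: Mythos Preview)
Your argument is correct. The lower bound via the trivial covering estimate $n\le(2d+1)|S|$ is standard, and your construction for the upper bound is carefully done: the three constraints on the starting position $s$ translate to $\max\{1,m-d\}\le s\le\min\{d+1,m\}$, and this interval is indeed nonempty for every $m\in[1,2d+1]$.

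Note, however, that the present paper does not actually prove Proposition~\ref{prop:path}; it is quoted from the companion paper~\cite{BIKZ-2025a} and stated here without argument. So there is no in-paper proof to compare against. Your write-up would serve perfectly well as a self-contained proof, and the approach you take---a counting lower bound plus an evenly spaced set of centers---is the natural one and almost certainly what the cited reference does as well.
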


The corresponding known result for cycles reads as follows. 

\begin{theorem} {\rm \cite[Theorem 3.1]{BIKZ-2025a}}
\label{thm:cycles}
If $d$, $p$, and $n\ge 3$ are integers with  $0 \leq p \leq 2d$ and $n \ge 2d+2$, then 
 $$
    \gamma_d^p(C_n)=
    \begin{cases}
		\left\lceil\frac{n}{2d+1}\right\rceil; & \frac{n}{p+1} \ge \left\lceil\frac{n}{2d+1} \right\rceil,\\
        	\infty; & \mbox{otherwise}.\\
    \end{cases}
    $$    
\end{theorem}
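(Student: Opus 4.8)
The plan is to prove $\gamma_d^p(C_n) \ge k$ and, when the stated side condition holds, the matching upper bound $\gamma_d^p(C_n) \le k$, where I abbreviate $k := \lceil n/(2d+1) \rceil$; I then argue that the side condition is also necessary for finiteness. For the lower bound, observe that for $n \ge 2d+1$ the closed $d$-neighborhood of any vertex of $C_n$ has exactly $2d+1$ vertices, so every $d$-distance dominating set of $C_n$ — in particular every $d$-distance $p$-packing set — contains at least $\lceil n/(2d+1)\rceil = k$ vertices (alternatively, invoke Proposition~\ref{prop:lower-bound-k-domination} together with the known $d$-distance domination number of a cycle). Note also that $n \ge 2d+2$ forces $k \ge 2$; this fact will be used throughout.

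The core of the proof is a description of the relevant sets in terms of gaps. For $S \subseteq V(C_n)$ with $|S| = m \ge 2$, list the vertices of $S$ cyclically and let $g_1, \dots, g_m$ be the lengths of the $m$ arcs they determine, so $g_i \ge 1$ and $\sum_i g_i = n$. One first checks that a closest pair of vertices of $S$ is always consecutive: otherwise the shorter arc joining such a pair would contain a third vertex of $S$, contradicting minimality. Hence $S$ is a $p$-packing if and only if $\min(g_i,\, n - g_i) \ge p + 1$ for all $i$; and since $m \ge 2$ makes $n - g_i = \sum_{j \ne i} g_j \ge (m-1)(p+1) \ge p+1$ automatic as soon as every $g_i \ge p+1$, this reduces to: $S$ is a $p$-packing if and only if $g_i \ge p+1$ for all $i$. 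Likewise, inspecting the vertices on the arc between two consecutive chosen vertices $x_i, x_{i+1}$ shows that $\{x_i, x_{i+1}\}$ $d$-dominates that arc exactly when $g_i \le 2d+1$, so $S$ is a $d$-distance dominating set if and only if $g_i \le 2d+1$ for all $i$.

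Given these two criteria, the case $n/(p+1) < k$ is immediate: any $p$-packing $S$ with $|S| = m \ge 2$ has $n = \sum_i g_i \ge m(p+1)$, while any $d$-distance dominating set has at least $k \ge 2$ vertices, so a set that were both would satisfy $k \le m \le n/(p+1) < k$, which is absurd; hence $\gamma_d^p(C_n) = \infty$. In the remaining case $n/(p+1) \ge k$, the integrality of $p+1$ and of $2d+1$ turns $n \ge k(p+1)$ into $\lfloor n/k \rfloor \ge p+1$, and turns $n \le k(2d+1)$ — which is exactly what $k = \lceil n/(2d+1)\rceil$ encodes — into $\lceil n/k \rceil \le 2d+1$. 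Then any set $S$ of $k$ vertices of $C_n$ whose $k$ consecutive gaps are each equal to $\lfloor n/k\rfloor$ or $\lceil n/k\rceil$ (say, $n \bmod k$ of the larger value and the remaining $k - (n \bmod k)$ of the smaller, which sum to $n$) has all its gaps in $[p+1,\, 2d+1]$, hence is simultaneously a $p$-packing and a $d$-distance dominating set; this yields $\gamma_d^p(C_n) \le k$, and together with the lower bound $\gamma_d^p(C_n) = k$.

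I do not expect a serious obstacle: the only real content is the two gap characterizations above — where the one mildly delicate point is excluding a non-consecutive closest pair and keeping in mind that a large gap has a short complementary arc — followed by routine floor/ceiling bookkeeping. Recording $k \ge 2$ at the outset is what keeps the various degenerate configurations ($|S| \le 1$, complementary-arc subtleties) from interfering.
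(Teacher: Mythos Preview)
The paper does not prove this theorem; it is quoted from \cite{BIKZ-2025a} and used as a black box. So there is no ``paper's own proof'' to compare against.

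That said, your argument is correct and is essentially the natural one. The gap characterizations are right: with $|S|=m\ge 2$, the set $S$ is a $p$-packing iff every consecutive gap satisfies $g_i\ge p+1$, and is $d$-distance dominating iff every $g_i\le 2d+1$. One small point worth making explicit in the domination direction: when some $g_i\ge 2d+2$, the vertex $v$ at distance $d+1$ from $x_i$ along that arc is indeed undominated not only by $x_i$ and $x_{i+1}$ but by every $x_j\in S$, since either path from $v$ to $x_j$ in $C_n$ passes through $x_i$ or $x_{i+1}$ and hence has length at least $d+1$; here $n\ge 2d+2$ and $m\ge 2$ are exactly what prevent the complementary arc from being short. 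You note the relevance of $k\ge 2$, and that suffices. The floor/ceiling bookkeeping ($n\ge k(p+1)\Rightarrow \lfloor n/k\rfloor\ge p+1$; $n\le k(2d+1)\Rightarrow \lceil n/k\rceil\le 2d+1$) and the construction with gaps in $\{\lfloor n/k\rfloor,\lceil n/k\rceil\}$ are standard and correct.
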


We conclude this section with the following announced bound. 

\begin{theorem}
\label{thm:upper-for-strong}
If $G$ and $H$ are graphs, and $d$ and $p$ are integers, then 
$$\gamma_d^p(G\boxtimes H) \le \gamma_d^p(G) \gamma_d^p(H)\,.$$ 
\end{theorem}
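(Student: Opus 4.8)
The plan is to take the ``product'' of optimal sets in the two factors and to let the strong-product distance formula do all the work. First I would dispose of the degenerate case: if $\gamma_d^p(G)=\infty$ or $\gamma_d^p(H)=\infty$, then $\gamma_d^p(G)\gamma_d^p(H)=\infty$ and there is nothing to prove. So assume both quantities are finite, fix a $\gamma_d^p$-set $S_G$ of $G$ and a $\gamma_d^p$-set $S_H$ of $H$, and put $S=S_G\times S_H\subseteq V(G\boxtimes H)$. Then $|S|=\gamma_d^p(G)\,\gamma_d^p(H)$, so it suffices to verify that $S$ is simultaneously a $p$-packing and a $d$-distance dominating set of $G\boxtimes H$; this will give $\gamma_d^p(G\boxtimes H)\le |S|$. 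Throughout, the only tool I would use is the identity $d_{G\boxtimes H}((g,h),(g',h'))=\max\{d_G(g,g'),d_H(h,h')\}$ recalled in Section~\ref{sec:intro}.

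For the packing property I would take two distinct vertices $(g,h),(g',h')\in S$. Then $g\ne g'$ or $h\ne h'$. If $g\ne g'$, then $d_G(g,g')\ge p+1$ since $S_G$ is a $p$-packing; if $h\ne h'$, then $d_H(h,h')\ge p+1$ since $S_H$ is a $p$-packing. In either case the distance formula yields $d_{G\boxtimes H}((g,h),(g',h'))\ge p+1$, so $S$ is a $p$-packing.

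For the domination property I would take an arbitrary $(u,v)\in V(G\boxtimes H)$. Since $S_G$ is a $d$-distance dominating set of $G$, there is some $g\in S_G$ with $d_G(u,g)\le d$ (one may take $g=u$ when $u\in S_G$); likewise there is some $h\in S_H$ with $d_H(v,h)\le d$. Then $(g,h)\in S$ and $d_{G\boxtimes H}((u,v),(g,h))=\max\{d_G(u,g),d_H(v,h)\}\le d$, so $S$ is a $d$-distance dominating set. Combining the two paragraphs finishes the argument.

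I do not expect a genuine obstacle here: the whole point is that the $\max$-distance formula lets the Cartesian-type set $S_G\times S_H$ inherit \emph{both} the packing and the domination properties at once, with no interaction between them. The only mild care needed is the bookkeeping in the infinite case and the remark that a $\gamma_d^p$-set, being a $d$-distance dominating set, already ``covers'' the vertices it contains, which is what makes the domination step go through for every $(u,v)$ including those in $S$.
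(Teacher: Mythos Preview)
Your proof is correct and follows essentially the same route as the paper: dispose of the infinite case, set $S=S_G\times S_H$, and use the $\max$-distance formula to verify the packing and domination properties separately. If anything, your packing argument is slightly more careful than the paper's, since you explicitly split on which coordinate differs rather than tacitly writing $\max\{p+1,p+1\}$.
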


\begin{proof}
If $\gamma_d^p(G) = \infty$ or $\gamma_d^p(H) = \infty$, then there is nothing to prove. Hence assume in the rest that $\gamma_d^p(G) <\infty$ and $\gamma_d^p(H) <\infty$. 

Let $S_G$ be a $\gamma_d^p$-set of $G$ and $S_H$ a $\gamma_d^p$-set of $H$. To prove the inequality it suffices to verify that $S = S_G\times S_H$ is a $d$-distance $p$-packing set of $G\boxtimes H$. If $(g,h)$ and $(g',h')$ are two vertices from $S$, then having in mind that $S_G$ and $S_H$ are $p$-packings, we have
$$d_{G\boxtimes H}((g,h), (g',h')) = \max \{d_G(g,g'), d_H(h,h')\} \ge \max \{p+1,p+1\} = p+1\,,$$
hence $S$ is a $p$-packing. Let now $(x,y)$ be an arbitrary vertex from $V(G\boxtimes H)\setminus S$. Since $S_G$ is a $d$-distance dominating set of $G$, there exists $g\in S_G$ such that $d_G(x,g)\le d$. Similarly, there exists a vertex $h\in S_H$ such that $d_H(y,h)\le d$. Therefore, $$d_{G\boxtimes H}((x,y), (g,h)) = \max \{d_G(x,g), d_H(y,h)\} \le \max \{d,d\} = d\,,$$
which implies that $S$ is a $d$-distance dominating set. 
\end{proof}

\section{A conjecture and supporting results}
\label{sec:conjecture}

In this section we investigate the question how the assumption $\gamma_d^p(G) = \infty$ affects the strong product with $G$ being its factor. In this regard, we make the following: 

\begin{conjecture}
\label{con:infinite}    
If $G$ is a graph, and $d$ and $p$ are integers such that  $\gamma_d^p(G) = \infty$, then $\gamma_d^p(G\boxtimes H) = \infty$ for every graph $H$.
\end{conjecture}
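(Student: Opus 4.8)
\medskip\noindent
The plan is to argue by contradiction, after peeling off the cases that already follow from Observation~\ref{obs:infinity}. If $p\le d$ then $\gamma_d^p(G)<\infty$ by Observation~\ref{obs:infinity}(iii), and if $\rad(G)\le d$ then $\gamma_d^p(G)=1$ by Observation~\ref{obs:infinity}(i), so we may assume $p\ge d+1$ and $\rad(G)>d$. The distance formula for the strong product gives $\ecc_{G\boxtimes H}(g,h)=\max\{\ecc_G(g),\ecc_H(h)\}$, hence $\rad(G\boxtimes H)=\max\{\rad(G),\rad(H)\}\ge\rad(G)>d$, and Observation~\ref{obs:infinity}(ii) settles the case $p\ge 2d+1$. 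The value $p=2d$ can also be settled directly: were $S$ a finite $\gamma_d^p$-set of $G\boxtimes H$, fix $h_0\in V(H)$ and put $S[h_0]=\{(g,h')\in S:\ d_H(h_0,h')\le d\}$; only vertices of $S[h_0]$ can $d$-dominate the layer $G^{h_0}$, so $\pi_G(S[h_0])$ is a $d$-distance dominating set of $G$, and for distinct $g_1,g_2$ in it, lifting to $(g_1,h_1),(g_2,h_2)\in S[h_0]$ and using $d_H(h_1,h_2)\le d_H(h_1,h_0)+d_H(h_0,h_2)\le 2d<p+1$ together with the packing property of $S$ forces $d_G(g_1,g_2)\ge p+1$, so $\pi_G(S[h_0])$ is also a $p$-packing, a contradiction. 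Finally, since $\gamma_d^p(G_1\sqcup G_2)=\gamma_d^p(G_1)+\gamma_d^p(G_2)$ and $G\boxtimes H$ decomposes over the components of $G$ and of $H$, we may also take $G$ and $H$ connected. Only the range $d+1\le p\le 2d-1$ then remains.

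\medskip\noindent
So assume $d+1\le p\le 2d-1$ and suppose, for contradiction, that $S$ is a finite $\gamma_d^p$-set of $G\boxtimes H$; let $\pi_G$ denote the projection to $V(G)$. As in the previous paragraph, $\pi_G(S)$, and indeed each $\pi_G(S[h])$, is a $d$-distance dominating set of $G$, so everything reduces to extracting from a cleverly chosen $S$ a set of vertices of $G$ that is \emph{simultaneously} a $d$-distance dominating set and a $p$-packing, contradicting $\gamma_d^p(G)=\infty$. The sole obstruction is a \emph{conflicting pair}, by which I mean vertices $(g,h),(g',h')\in S$ with $1\le d_G(g,g')\le p$: for such a pair the packing property of $S$ forces $d_H(h,h')\ge p+1$. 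One further sub-case is then immediate: if $\diam(H)\le p$ no conflicting pair can exist, so $\pi_G(S)$ is already a $p$-packing and we are done. (This is presumably related to the ``$(d,p)$-close vertex'' hypothesis announced in the introduction; in general one should first try to push the $H$-coordinates of $S$ into a bounded region of $H$ and only then project.)

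\medskip\noindent
For the remaining case, $\diam(H)\ge p+1$, the two members of a conflicting pair sit in far-apart $H$-layers, each possibly indispensable for the domination of its own layer, so neither can simply be discarded. The route I would take is extremal: among all finite $\gamma_d^p$-sets of $G\boxtimes H$ choose $S$ minimising a suitable potential -- the number of conflicting pairs, or $\sum_{(g,h)\in S}\ecc_H(h)$ -- and then perform a local surgery (replacing a conflicting pair by a single well-placed vertex, or sliding one of its coordinates along a geodesic of $H$) that contradicts minimality. A natural alternative is induction on $|V(H)|$, with base case $H=K_1$ (where $G\boxtimes K_1\cong G$, so $\gamma_d^p(G\boxtimes K_1)=\infty$) and an inductive step deleting or contracting a carefully chosen vertex of $H$.

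\medskip\noindent
The step I expect to be the genuine obstacle is exactly this surgery, equivalently the inductive step. The torus case $C_m\boxtimes C_n$ works because the vertex-transitivity and periodicity of a cycle keep the overlap regions $N_H^d[h]\cap N_H^d[h']$ small and uniform, which enables a counting/averaging argument; for an arbitrary $H$ these overlaps may be large and irregular, and there is no evident way to repair a conflicting pair without destroying $d$-distance domination in some layer. The natural first target beyond the cases above is therefore one factor being a path $P_n$: the layers are linearly ordered, each $S[j]$ (in the notation above, with $h_0=j$) projects onto a $d$-distance dominating, hence non-$p$-packing, set of $G$, and a conflicting pair with path-coordinates $i_1\le i_2$ is contained in $S[j]$ only for the at most $2d-p$ values $j\in[i_2-d,\,i_1+d]$; turning this local picture, together with the bound $\bigl|\{h':(g,h')\in S\}\bigr|\le\lceil n/(p+1)\rceil$ on each $G$-fiber of $S$, into a genuine contradiction for every fixed $n$ is the crux of the matter.
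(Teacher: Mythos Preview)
The statement is a \emph{conjecture}; the paper does not prove it but proposes it as an open problem and records several supporting results (Theorems~\ref{thm:pendant-path-infty-neighborhood} and~\ref{thm:strong-torus-infinity}, Corollaries~\ref{cor:path-infty} and~\ref{cor:pendant-path-infty}). Your proposal is likewise not a complete proof---you say so explicitly in the last two paragraphs---so you and the paper stop at the same boundary.

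Your reductions in the first paragraph (disposing of $p\le d$, $p\ge 2d+1$, and $p=2d$, leaving only $d+1\le p\le 2d-1$) match exactly what the paper records immediately after stating the conjecture. Your direct argument for $p=2d$ and your second-paragraph observation that $\diam(H)\le p$ suffices are both instances of the paper's Theorem~\ref{thm:pendant-path-infty-neighborhood}: a vertex $u\in V(H)$ is \emph{$(d,p)$-close} if any two vertices of $N_H^d[u]$ are at distance at most $p$, and the theorem shows that a single such vertex forces $\gamma_d^p(G\boxtimes H)=\infty$. For $p=2d$ every vertex is $(d,p)$-close by the triangle inequality, and for $\diam(H)\le p$ trivially so; your projection argument is precisely the proof of Theorem~\ref{thm:pendant-path-infty-neighborhood} specialised to these two situations.

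One place where you undersell your own idea: the ``first target'' $H=P_n$ you propose in the last paragraph is not actually open, and the counting sketch you outline there is unnecessary. Take $h_0$ to be a leaf of the path; then $N_H^d[h_0]$ consists of at most $d+1$ consecutive vertices, any two at distance $\le d\le p$, so $h_0$ is $(d,p)$-close and your own $S[h_0]$-projection argument from the first paragraph applies verbatim. This is the paper's Corollary~\ref{cor:path-infty}. The genuinely open territory---arbitrary $H$ with $d+1\le p\le 2d-1$ and no $(d,p)$-close vertex---remains unresolved both in your proposal and in the paper.
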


\begin{remark} \label{other-direction-of-conjecture}
By Theorem~\ref{thm:upper-for-strong} we know that the reverse of the implication in the conjecture is true. That is, if $\gamma_d^p(G\boxtimes H) = \infty$, then $\gamma_d^p(G) = \infty$ or $\gamma_d^p(H) = \infty$. If Conjecture~\ref{con:infinite} is true, then for any $n$-fold strong product $F=G_1 \boxtimes \cdots \boxtimes G_n$ it holds that $\gamma_d^p(F)=\infty $ if and only if $\gamma_d^p(G_i)=\infty $ is true for at least one factor. 
\end{remark}

A {\em perfect $r$-code} of a graph $G$ is a set $X\subseteq V(G)$ such that the sets $N_G^r[u]$, $u\in X$, form a partition of $V(G)$. By definition, the existence of a perfect $r$-code in $G$ is equivalent to the fact that $\gamma_r^{2r}(G) < \infty$. It is proved in~\cite{Abay-2009} that the $n$-fold strong product of simple graphs has a perfect $r$-code if and only if each factor has a perfect $r$-code. This result implies that Conjecture~\ref{con:infinite} is true for $p=2d$. By Observation~\ref{obs:infinity} (i) and (ii), the statement is true when $p \ge 2d+1$. Further, the condition $\gamma_d^p(G) = \infty$ does not hold if $p \leq d$. Therefore, it is enough to consider Conjecture~\ref{con:infinite} for $d <p<2d$. 

A vertex $u$ of a graph $G$ is {\em $(d,p)$-close} if $d_G(x,y) \leq p$ holds for every $x, y\in N_G^d[u]$. We next verify that Conjecture~\ref{con:infinite} holds if $H$ contains a $(d,p)$-close vertex. 

\begin{theorem} \label{thm:pendant-path-infty-neighborhood}
   Let $d$ and $p$ be two nonnegative integers with $p < 2d$ and let $G$ and $H$ be two graphs. If $\gamma_d^p(G)= \infty$ and $H$ contains a $(d,p)$-close vertex, then $\gamma_d^p(G \boxtimes H)=\infty$.  
\end{theorem}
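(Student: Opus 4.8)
The plan is to argue by contradiction: assume $\gamma_d^p(G\boxtimes H)<\infty$, fix a $d$-distance $p$-packing set $S$ of $G\boxtimes H$ together with a $(d,p)$-close vertex $u$ of $H$, and extract from $S$ a $d$-distance $p$-packing set of $G$, which will contradict $\gamma_d^p(G)=\infty$. The natural candidate is obtained by keeping only the part of $S$ lying close (in the $H$-coordinate) to the layer $G^u$ and projecting it onto $G$.

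Concretely, I would set $S_1=S\cap\big(V(G)\times N_H^d[u]\big)=\{(g,h)\in S:\ d_H(u,h)\le d\}$ and let $T=\{g\in V(G):\ (g,h)\in S_1\text{ for some }h\in V(H)\}$ be its projection onto the first coordinate. To check that $T$ is a $d$-distance dominating set of $G$, take $x\in V(G)\setminus T$. Then $(x,u)\notin S$, since $(x,u)\in S$ would give $(x,u)\in S_1$ (as $d_H(u,u)=0\le d$) and hence $x\in T$. As $S$ is $d$-distance dominating, some $(g,h)\in S$ satisfies $\max\{d_G(x,g),d_H(u,h)\}=d_{G\boxtimes H}((x,u),(g,h))\le d$ by the distance formula for strong products; this forces $d_H(u,h)\le d$, so $(g,h)\in S_1$, $g\in T$, and $d_G(x,g)\le d$. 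For the packing property, take distinct $g,g'\in T$ with witnesses $(g,h),(g',h')\in S_1$; since $h,h'\in N_H^d[u]$ and $u$ is $(d,p)$-close we have $d_H(h,h')\le p$, while $(g,h)\ne(g',h')$ together with the fact that $S$ is a $p$-packing gives $\max\{d_G(g,g'),d_H(h,h')\}\ge p+1$, whence $d_G(g,g')\ge p+1$. Thus $T$ is a $d$-distance $p$-packing set of $G$, so $\gamma_d^p(G)\le|T|<\infty$, the desired contradiction.

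There is no serious obstacle here; the content lies entirely in the two design choices --- restricting $S$ to the slab over a $(d,p)$-close vertex $u$ and then projecting onto $G$ --- together with the observation that $(d,p)$-closeness of $u$ is exactly the hypothesis needed for the projection to inherit the $p$-packing property (a generic vertex of $H$ would only give $d_H(h,h')\le 2d$, which is too weak when $p<2d$). The one place requiring mild care is that the packing bound for $S$ must be applied to genuinely distinct vertices of $G\boxtimes H$, which is immediate since for $g\ne g'$ the witnesses $(g,h)$ and $(g',h')$ are distinct as well. Finally, the argument uses neither finiteness of $S$ nor finiteness of the graphs, so it applies verbatim under the convention that $\gamma_d^p(G)=\infty$ means that no $d$-distance $p$-packing set of $G$ exists.
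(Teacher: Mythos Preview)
Your proof is correct and follows essentially the same approach as the paper: restrict $S$ to the slab $V(G)\times N_H^d[u]$, project onto $G$, and use $(d,p)$-closeness of $u$ to control the $H$-distances. The only cosmetic difference is that the paper phrases the punchline contrapositively (the projection is $d$-distance dominating but, since $\gamma_d^p(G)=\infty$, cannot be a $p$-packing, and this failure produces two vertices of $S$ at distance $\le p$), whereas you verify directly that the projection $T$ is a $d$-distance $p$-packing set and obtain $\gamma_d^p(G)\le|T|<\infty$.
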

\begin{proof}
Let $\gamma_d^p(G)= \infty$. Observation~\ref{obs:infinity} (iii) then implies $d < p$.
Let $u\in V(H)$ be a $(d,p)$-close vertex. Suppose, to the contrary of the statement, that $\gamma_d^p(G \boxtimes H) < \infty$ and $S$ is a $\gamma_d^p$-set in $G \boxtimes H$.
\medskip

Consider the layer $G^{u}$ in $G \boxtimes H$ and let $X = N_H^d[u]$. Then, $V(G) \times X$ is the set of vertices in the strong product that can $d$-distance dominate $G^{u}$. Projecting the vertices in $S \cap (V(G) \times X)$ to the layer $G^{u}$, we obtain a $d$-distance dominating set of $G^{u}$. By assumption, $\gamma_d^p(G^{u})= \infty$ and hence, the projected vertices cannot form a $p$-packing. It means that there exist two vertices $(x,y)$ and $(x',y')$ in $S\cap (V(G) \times X)$ such that the projected vertices $(x,u)$ and $(x',u)$ satisfy
\begin{equation} \label{eq:00}
  p \ge d_{G\boxtimes H} ((x,u), (x',u))= d_G(x,x').  
\end{equation} 
Since $S$ is a $p$-packing, we have
\begin{equation} \label{eq:0}
    p+1 \le d_{G\boxtimes H} ((x,y), (x',y'))= \max \{d_G(x,x'), d_H(y,y')\}.
\end{equation}
Since $y$ and $y'$ belong to $X = N_H^d[u]$, and $u$ is a $(d,p)$-close vertex, we have $d_H(y,y') \leq p$. This inequality, together with~\eqref{eq:00}, contradicts~\eqref{eq:0}. We may conclude $\gamma_d^p(G \boxtimes H) = \infty$.
\end{proof}

Complete graphs and graphs with diameter at most $p\ge 1$ contain $(d,p)$-close vertices. If $p=2d-1$ and $H$ contains a simplicial vertex (i.e., a vertex with a complete neighborhood), then again, $\gamma_d^p(G)= \infty$ implies $\gamma_d^p(G \boxtimes H)=\infty$. In addition, if $H\colon z_1 \dots z_n$ is a path, then the leaf $z_1$ is a $(d,p)$-close vertex. Indeed, if $d_H(z_1,z_i) \leq d$ and $d_H(z_1,z_j) \leq d$, then $d_H(z_i,z_j) \leq d \le p$. Theorem~\ref{thm:pendant-path-infty-neighborhood} then directly implies the following statement:

\begin{corollary} \label{cor:path-infty}
 Let $G$ be a graph and $d$, $p$, $n$ three nonnegative integers with $  p < 2d$ and $2 \le n$.  If $\gamma_d^p(G)= \infty$, then $\gamma_d^p(G \boxtimes P_n)=\infty$.  
\end{corollary}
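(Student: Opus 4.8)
The plan is to obtain Corollary~\ref{cor:path-infty} as an immediate consequence of Theorem~\ref{thm:pendant-path-infty-neighborhood}: essentially the only thing that needs to be checked is that every path $P_n$ with $n\ge 2$ contains a $(d,p)$-close vertex. Before doing so I would pin down the relevant parameter range. The hypothesis $\gamma_d^p(G)=\infty$ together with Observation~\ref{obs:infinity}~(iii) forces $d<p$, and combining this with the standing assumption $p<2d$ also gives $d\ge 1$ (so that $p<2d$ is possible) and in particular $d\le p$. Thus we are genuinely in the situation addressed by Theorem~\ref{thm:pendant-path-infty-neighborhood}.

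Next I would verify the $(d,p)$-closeness of an endpoint of $P_n$. Writing $V(P_n)=[n]$ with the natural adjacency, take the leaf $z_1=1$. Its $d$-neighbourhood is $N_{P_n}^d[1]=\{1,\dots,\min\{d+1,n\}\}$, so for any two vertices $i,j$ in this set we have $d_{P_n}(i,j)=|i-j|\le d\le p$. Hence $1$ is a $(d,p)$-close vertex of $P_n$; this is exactly the short distance computation indicated in the paragraph preceding the statement.

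Finally, applying Theorem~\ref{thm:pendant-path-infty-neighborhood} with $H=P_n$ — legitimate since $p<2d$, $\gamma_d^p(G)=\infty$, and $P_n$ has a $(d,p)$-close vertex — yields $\gamma_d^p(G\boxtimes P_n)=\infty$, as claimed. I do not expect any real obstacle here: the substance is entirely carried by Theorem~\ref{thm:pendant-path-infty-neighborhood}, and the only new input is the trivial observation that all pairwise distances inside the $d$-ball around a leaf of a path are at most $d$.
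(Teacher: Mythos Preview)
Your argument is correct and is essentially identical to the paper's own derivation: both verify that a leaf of $P_n$ is $(d,p)$-close (using $d_{P_n}(i,j)\le d\le p$ for $i,j\in N_{P_n}^d[1]$) and then invoke Theorem~\ref{thm:pendant-path-infty-neighborhood}. The only difference is that you spell out the harmless parameter check $d<p$ and $d\ge 1$, which the paper leaves implicit.
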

Assuming that $H$ contains a pendant path that is long enough, a more general consequence of Theorem~\ref{thm:pendant-path-infty-neighborhood} can be proved. 
\begin{corollary}   \label{cor:pendant-path-infty}
   Let $d$ and $p$ be two nonnegative integers with $p < 2d$ and let $G$ and $H$ be two graphs. If $\gamma_d^p(G)= \infty$ and $H$ contains a pendant path on at least $\frac{2d-p}{2}$ vertices, then $\gamma_d^p(G \boxtimes H)=\infty$.  
\end{corollary}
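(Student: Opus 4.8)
The plan is to reduce the claim to Theorem~\ref{thm:pendant-path-infty-neighborhood}: I will show that the leaf $z_1$ of the guaranteed pendant path $z_1\dots z_k$ (so $\deg_H(z_1)=1$ and $k\ge \frac{2d-p}{2}$) is a $(d,p)$-close vertex of $H$, after which the conclusion is immediate. Two standing remarks: since $\gamma_d^p(G)=\infty$, Observation~\ref{obs:infinity}~(iii) gives $d<p$, and combined with the hypothesis $p<2d$ we are in the regime $d<p<2d$; in particular $d\le p$.

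The structural fact I would isolate is that, by the degree conditions in the definition of a pendant path, the set $A=\{z_1,\dots,z_k\}$ is joined to $V(H)\setminus A$ by a single edge, namely the edge from $z_k$ to its neighbour $w\notin A$ (when $k=1$ this merely says that $w$ is the unique neighbour of $z_1$). Hence every $z_1$--$v$ path with $v\notin A$ passes through $w$, so that $d_H(z_1,v)=k+d_H(w,v)$; consequently, if $v\in N_H^d[z_1]\setminus A$, then $d_H(w,v)\le d-k$ (which already forces $k\le d$ in this situation), while for every $z_i\in A$ we have $d_H(z_i,w)=(k-i)+1\le k$.

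Given these facts, checking that $z_1$ is $(d,p)$-close amounts to a three-way case analysis for vertices $x,y\in N_H^d[z_1]$. If $x=z_i$ and $y=z_j$ both lie in $A$, then $i,j\le d+1$, so $d_H(x,y)=|i-j|\le d\le p$. If exactly one of them, say $x=z_i$, lies in $A$, then $d_H(x,y)\le d_H(x,w)+d_H(w,y)\le k+(d-k)=d\le p$. Finally, if neither lies in $A$, then $d_H(x,y)\le d_H(x,w)+d_H(w,y)\le 2(d-k)\le p$, where the last step is exactly the hypothesis $k\ge\frac{2d-p}{2}$. Thus $d_H(x,y)\le p$ in every case, $z_1$ is $(d,p)$-close, and Theorem~\ref{thm:pendant-path-infty-neighborhood} yields $\gamma_d^p(G\boxtimes H)=\infty$.

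I do not anticipate a genuine difficulty here: the only points requiring care are verifying the single-edge (bridge) claim uniformly, including the degenerate case $k=1$ where $z_k=z_1$ is itself the vertex of degree $1$, and applying the identity $d_H(z_1,v)=k+d_H(w,v)$ in the correct direction so as to bound $d_H(w,v)$ by $d-k$. The pendant-path length hypothesis then enters only in the third case of the analysis, which is precisely where a ``raw'' triangle-inequality bound $d_H(x,y)\le 2d$ would be too weak.
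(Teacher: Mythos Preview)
Your proof is correct and follows the same approach as the paper: both reduce to Theorem~\ref{thm:pendant-path-infty-neighborhood} by showing that the leaf $z_1$ of the pendant path is a $(d,p)$-close vertex, using the cut-edge structure at $z_k$ to route all distances through the attachment vertex (your $w$, the paper's $z$) and then bounding via the triangle inequality. The only difference is cosmetic: the paper splits into the two cases ``one of $v_1,v_2$ lies on a shortest $z_1$-path to the other'' versus ``neither does,'' which amounts to your three-way split according to how many of $x,y$ lie in $A$.
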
 

\begin{proof}
 Let $k= \lceil \frac{2d-p}{2} \rceil$ and let $z_1z_2\dots z_k$ be a pendant path in $H$ such that $z_k$ is adjacent to a vertex $z \in V(H)$ different from $z_{k-1}$. Observe that $d_H(z_1, z)=k \ge \frac{2d-p}{2}$. In light of Theorem~\ref{thm:pendant-path-infty-neighborhood} it suffices to prove that $z_1$ is a $(d,p)$-close vertex. 

If $v_i$ lies on a shortest path between $z_1$ and $v_{3-i}$, for $i \in \{1,2\}$, then $d_H(v_1,v_2) \leq d_H(z_1, v_{3-i}) \leq d <p$ proves the statement. Otherwise, neither $v_1$ nor $v_2$ belongs to the pendant path $z_1 \dots z_k$, and the distance can be estimated as
\begin{align*}
d_H(v_1,v_2) & \leq d_H(z, v_1)+d_H(z,v_2) = (d_H(z_1, v_1)-k)+(d_H(z_1,v_2)-k) \\
& \le (d-k) + (d-k) \leq 2d- (2d-p)=p.
\end{align*}
This finishes the proof of the statement.  
\end{proof}
  
To further support Conjecture~\ref{con:infinite}, we prove the following result. 

\begin{theorem} \label{thm:strong-torus-infinity}
For every two nonnegative integers $d$ and $p$ and for every two cycles $C_m$ and $C_n$, if $\gamma_d^p(C_m)= \infty$, then $\gamma_d^p(C_m \boxtimes C_n)=\infty$.    
\end{theorem}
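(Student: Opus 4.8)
The plan is to reduce the torus statement to an appropriate path/cycle statement so that the hypothesis $\gamma_d^p(C_m)=\infty$ can be exploited directly on a single $C_m$-layer, in the spirit of the proof of Theorem~\ref{thm:pendant-path-infty-neighborhood}. First I would dispose of the easy ranges: by Observation~\ref{obs:infinity}~(iii) the hypothesis $\gamma_d^p(C_m)=\infty$ forces $d<p$, and if $p\ge 2d+1$ then $\gamma_d^p(C_m\boxtimes C_n)=\infty$ automatically, again by Observation~\ref{obs:infinity} together with the distance formula $d_{C_m\boxtimes C_n}((g,h),(g',h'))=\max\{d_{C_m}(g,g'),d_{C_n}(h,h')\}$ (which shows $\rad(C_m\boxtimes C_n)>d$ whenever $m$ or $n$ is large enough, and the small cases are covered by part~(i)). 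So it remains to handle $d<p<2d+1$, i.e.\ $d<p\le 2d$.

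Next I would try to argue by contradiction: suppose $S$ is a $\gamma_d^p$-set of $C_m\boxtimes C_n$. Fix any vertex $u\in V(C_n)$ and consider the layer $C_m^{u}$. The set of vertices of the product that can $d$-dominate $C_m^{u}$ is exactly $V(C_m)\times N_{C_n}^d[u]$, and $N_{C_n}^d[u]$ is an arc of $C_n$ of length at most $2d$ (it is all of $C_n$ when $n\le 2d+1$, but then by Observation~\ref{obs:infinity}~(i) $\gamma_d^p(C_n)\le 1$, and one should separately check this degenerate case — in fact if $n\le 2d+1$ then $C_n$ has a $(d,p)$-close... no: a vertex $u$ with $\ecc(u)\le d$ need not be $(d,p)$-close since $p<2d$, so this case genuinely needs its own short argument). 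In the main case $n\ge 2d+2$, the arc $X=N_{C_n}^d[u]$ is a path on at most $2d+1$ vertices; projecting $S\cap(V(C_m)\times X)$ onto $C_m^u$ yields a $d$-distance dominating set of $C_m\cong C_m^u$, which by the hypothesis $\gamma_d^p(C_m)=\infty$ cannot be a $p$-packing. Hence there are $(x,y),(x',y')\in S\cap(V(C_m)\times X)$ with $d_{C_m}(x,x')\le p$. Since $S$ is a $p$-packing, $\max\{d_{C_m}(x,x'),d_{C_n}(y,y')\}\ge p+1$, so $d_{C_n}(y,y')\ge p+1$; but $y,y'$ lie in an arc of $2d$ edges, so $d_{C_n}(y,y')\le 2d$. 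This is not yet a contradiction because $p\le 2d$.

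The main obstacle, therefore, is closing this gap: a single layer only forces $p+1\le d_{C_n}(y,y')\le 2d$, which is consistent. The idea to push through is to run the argument simultaneously over \emph{all} layers $C_m^{u}$, $u\in V(C_n)$, and to track how many vertices of $S$ project into each arc and how the $p$-packing constraint on $C_n$-coordinates interacts with the $d$-domination constraint. Concretely, let $T\subseteq V(C_n)$ be the set of second coordinates that actually appear in $S$; the $p$-packing condition on $S$ implies (via the $\max$ formula) that vertices of $S$ sharing a ``close'' first coordinate are forced to have well-separated second coordinates, while $d$-domination of layer $C_m^u$ forces enough vertices of $S$ with second coordinate within distance $d$ of $u$. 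Counting these against the arithmetic of Theorem~\ref{thm:cycles} — namely that $\gamma_d^p(C_m)<\infty$ fails exactly when $m/(p+1)<\lceil m/(2d+1)\rceil$, so any $d$-dominating set of $C_m$ that is a $p$-packing would be too small — one should derive that the projections to $C_m^u$ are genuinely too dense to be $p$-packings no matter how the second coordinates are spread, forcing two of them onto the same layer at distance $\le p$ in $C_m$ \emph{and} at distance $\le 2d$ but we will also be able to arrange them at distance $\le p$ in $C_n$ by choosing $u$ adaptively (picking $u$ so that the two offending projections come from $y,y'$ that are near the ``center'' of the arc). I expect the delicate part to be exactly this adaptive choice of $u$ together with the double counting that converts ``each layer individually is over-dominated'' into ``some layer has two dominators that are simultaneously $p$-close in both coordinates'', which is the contradiction with $S$ being a $p$-packing in the product.
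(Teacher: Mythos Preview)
Your setup is sound up to the point where you identify the obstacle: projecting $S\cap(V(C_m)\times N_{C_n}^d[u])$ to a single $C_m$-layer only yields a pair with $d_{C_m}(x,x')\le p$ and $p+1\le d_{C_n}(y,y')\le 2d$, which is consistent with $S$ being a $p$-packing. But your plan for closing the gap---an ``adaptive choice of $u$'' that would force two dominators to be simultaneously $p$-close in both coordinates---is not how the argument goes and is not obviously workable: since $S$ is assumed to be a $p$-packing, no such pair exists, so you cannot hope to exhibit one; the contradiction has to be numerical.

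The missing idea is a specific split of the $(2d{+}1)$-row band $A_i=\bigcup_{j=i-d}^{i+d}R_j$ into the first $p{+}1$ rows $B_i=\bigcup_{j=i-d}^{i-d+p}R_j$ and the remaining $2d{-}p$ rows $A_i\setminus B_i$. Any two vertices of $S$ in $B_i$ have $C_n$-coordinates within $p$, so the $p$-packing assumption forces their $C_m$-coordinates at least $p{+}1$ apart; hence $|B_i\cap S|\le\lfloor m/(p{+}1)\rfloor$, and summing over $i$ gives $|S|(p{+}1)\le n\lfloor m/(p{+}1)\rfloor$. On the other hand, if all of $A_i\cap S$ lay in $B_i$, its projection to row $i$ would be a $d$-dominating $p$-packing of $C_m$, contradicting $\gamma_d^p(C_m)=\infty$; so $(A_i\setminus B_i)\cap S\neq\emptyset$ for every $i$, and summing gives $|S|(2d{-}p)\ge n$. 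These two bounds combine to $(p{+}1)/(2d{-}p)\le\lfloor m/(p{+}1)\rfloor$, which is then shown to be impossible using the arithmetic of Theorem~\ref{thm:cycles} (namely $\lfloor m/(p{+}1)\rfloor=\lfloor m/(2d{+}1)\rfloor$ and $m/(2d{+}1)<(p{+}1)/(2d{-}p)$). Two minor points: this forces you to treat $p=2d$ separately (the paper invokes the perfect $d$-code result of~\cite{Abay-2009}), since $2d{-}p=0$ kills the lower bound; and your worry about small $n$ is harmless, since $n\le 2d{+}1$ with $d<p$ gives $\diam(C_n)\le d<p$, so the full projection of $S$ to $C_m$ is already a $d$-dominating $p$-packing.
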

\proof
If $p \ge 2d+1$, the statement follows by Observation~\ref{obs:infinity}~(ii). If $p=2d$ then, under the present conditions, the nonexistence of a perfect $d$-code in $C_m \boxtimes C_n$ follows by~\cite{Abay-2009}. By Observation~\ref{obs:infinity}~(iii), the condition $\gamma_d^p(C_m) = \infty$ does not hold if $p \leq d$. Therefore, it suffices to consider the cases with $d < p <2d$.

The vertex set of the layer $(C_m)^i$ is denoted by $R_i$, for $i \in [n]$; that is $R_i = \{(x,i) \colon x \in [m]\}.$ Sometimes the $m$-cycle induced by $R_i$ is also referred to as $R_i$. 

By Theorem~\ref{thm:cycles}, the condition $\gamma_d^p(C_m)= \infty $ implies $\frac{m}{p+1} < \lceil \frac{m}{2d+1} \rceil$. Since the assumption $p <2d$ gives $\frac{m}{p+1} >  \frac{m}{2d+1} $, we may infer that 
\begin{equation} \label{eq:1}
\left\lfloor \frac{m}{p+1} \right\rfloor =  \left\lfloor \frac{m}{2d+1} \right\rfloor <
     \frac{m}{2d+1}.  
\end{equation}
The inequality $\frac{m}{p+1}- \frac{m}{2d+1} <1 $ also follows and it is equivalent to the following inequality to be applied later:
\begin{equation} \label{eq:2}
    \frac{m}{2d+1} < \frac{p+1}{2d-p}.
\end{equation}

Consider a row $R_i$ in $C_m \boxtimes C_n$ and define $A_i=\bigcup_{j=i-d}^{i+d} R_j$ and $B_i= \bigcup_{j=i-d}^{i-d+p} R_j$ for every $i \in [n]$. Thus $A_i$,  $B_i$, and $A_i \setminus B_i$ are the union of $2d+1$ rows, $p+1$ rows, and $2d-p$ rows, respectively. See Fig.~\ref{fig:sets-in-product-of-cycles}.

\begin{figure}[ht!]
\begin{center}
\begin{tikzpicture}[scale=1.0,style=thick,x=1cm,y=1cm]
\def\vr{3pt}
\begin{scope}[xshift=-0cm, yshift=0cm] 
\coordinate(x1) at (1.0,-0.5);
\coordinate(x2) at (2.0,-0.5);
\coordinate(xm) at (10.0,-0.5);
\coordinate(xm-1) at (9.0,-0.5);
\coordinate(y1) at (-0.5,1.0);
\coordinate(y2) at (-0.5,1.7);
\coordinate(yn) at (-0.5,8.2);
\coordinate(yn-1) at (-0.5,7.5);
\coordinate(yi) at (-0.5,5.0);
\coordinate(z1) at (1.1,5.0);
\coordinate(z2) at (2.0,5.0);
\coordinate(zm) at (9.9,5.0);
\coordinate(zm-1) at (9.0,5.0);
\draw (-1,0) -- (11.5,0);
\draw (0,-1) -- (0,9);
\draw (x1) -- (x2);
\draw (x2) -- (2.5,-0.5);
\draw (xm) -- (xm-1);
\draw (7.5,-0.5) -- (xm-1);
\draw (1,-0.5) .. controls (2,0) and (9,0) .. (10,-0.5);
\draw (y1) -- (y2);
\draw (y2) -- (-0.5,2.2);
\draw (yn) -- (yn-1);
\draw (-0.5,7.1) -- (yn-1);
\draw (y1) .. controls (0,2) and (0,7.2) .. (yn);
\draw (yi) -- (-0.5,5.3);
\draw (yi) -- (-0.5,4.7);
\draw [draw=black] (0.5,0.5) rectangle (10.5,8.5);
\draw [draw=black, dashed] (0.7,3) rectangle (10.3,7);
\draw [stealth-stealth](10.7,3.0) -- (10.7,7.0);
\draw [draw=black, densely dotted] (0.8,3.1) rectangle (10.2,6.0);
\draw [stealth-stealth](5.5,3.1) -- (5.5,6.0);
\draw [draw=black] (0.9,4.75) rectangle (10.1,5.25);
\draw [stealth-stealth](6.5,6.0) -- (6.5,7.0);
\draw(x1)[fill=white] circle(\vr);
\draw(x2)[fill=white] circle(\vr);
\draw(xm)[fill=white] circle(\vr);
\draw(xm-1)[fill=white] circle(\vr);
\draw(y1)[fill=white] circle(\vr);
\draw(y2)[fill=white] circle(\vr);
\draw(yn)[fill=white] circle(\vr);
\draw(yn-1)[fill=white] circle(\vr);
\draw(yi)[fill=white] circle(\vr);
\draw(z1)[fill=white] circle(\vr);
\draw(z2)[fill=white] circle(\vr);
\draw(zm)[fill=white] circle(\vr);
\draw(zm-1)[fill=white] circle(\vr);

\node [below=1.0mm] at (x1) {$1$};
\node [below=1.0mm] at (x2) {$2$};
\node [below=1.0mm] at (xm-1) {$m-1$};
\node [below=1.0mm] at (xm) {$m$};
\node [right=5.0mm] at (xm) {$C_m$};
\node [right=7.0mm] at (x2) {$\dots$};
\node [left=7.0mm] at (xm-1) {$\dots$};
\node [left=1.0mm] at (y1) {$1$};
\node [left=1.0mm] at (y2) {$2$};
\node [left=1.0mm] at (yn-1) {$n-1$};
\node [left=1.0mm] at (yn) {$n$};
\node [left=1.0mm] at (yi) {$i$};
\node [above=3.0mm] at (yn) {$C_n$};
\node [above=5.0mm] at (y2) {$\vdots$};
\node [below=2.0mm] at (yn-1) {$\vdots$};
\node at (9.5,8) {$C_m\boxtimes C_n$};
\node [right=7.0mm] at (z2) {$\dots$};
\node [left=7.0mm] at (zm-1) {$\dots$};
\node at (11.4, 5.0) {$2d+1$};
\node at (6.1, 4.0) {$p+1$};
\node at (7.2, 6.5) {$2d-p$};
\node at (9.8, 6.6) {$A_i$};
\node at (1.2, 3.5) {$B_i$};
\node at (4.5, 5) {$R_i$};
\end{scope}
\end{tikzpicture}
\caption{Sets $R_i$, $A_i$, and $B_i$ in $C_m\boxtimes C_n$ with respect to given $d$ and $p$, where $d < p <2d$.}
\label{fig:sets-in-product-of-cycles}
\end{center}
\end{figure}
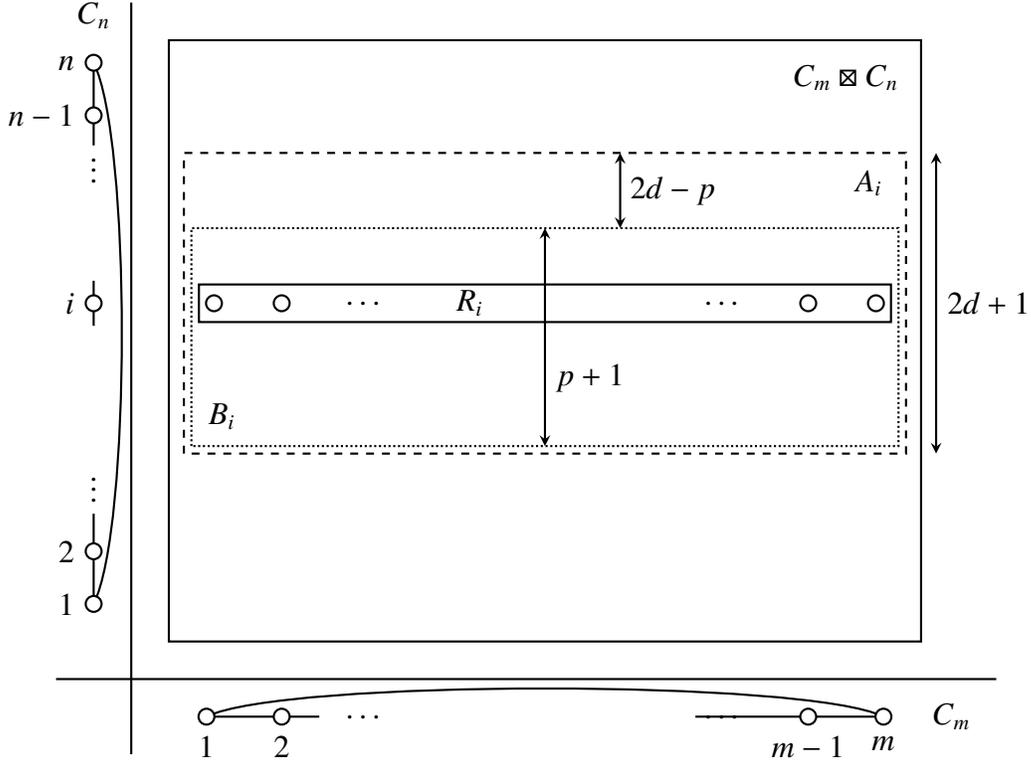

Suppose for a contradiction that $\gamma_d^p(C_m \boxtimes C_n)$ is finite and let $S$ be a $\gamma_d^p$-set in $C_m \boxtimes C_n$. First observe that for every two vertices $v=(x,y)$ and $v'=(x',y')$ from $B_i\cap S$, the difference $|y-y'|$ is at most $p$. Since $S$ is a $p$-packing, $|x-x'| \ge p+1$ follows. Then, projecting the vertices in $B_i \cap S$ to $R_i$, the obtained set is a $p$-packing in the cycle $R_i$. Consequently, $|B_i \cap S| \leq \lfloor \frac{m}{p+1}\rfloor$ holds for each $i \in [n]$. Count now the pairs $(i, v)$ with $i \in [n]$ and $v \in B_i \cap S$. Since each $v \in S$ belongs to exactly $p+1$ different sets $B_i$, the number of such pairs is $|S|\,(p+1)$. On the other hand, we know that every $i \in [n]$ is contained in at most $\lfloor \frac{m}{p+1}\rfloor$ pairs. Therefore,
\begin{equation} \label{eq:3}
    |S|\, (p+1) \leq n \left\lfloor \frac{m}{p+1}\right\rfloor.
\end{equation}

Since $S$ is supposed to be a $\gamma_d^p$-set in the strong torus, every vertex in $R_i$ is $d$-distance dominated by a vertex in $A_i$. If we project the vertices in $A_i \cap S$ to the row $R_i$, we get a $d$-distance dominating set in the cycle $R_i$. 
As $\gamma_d^p(C_m)=\infty$, this set of projected vertices is not a $p$-packing in $R_i$ and hence, not all vertices in $A_i \cap S$ belong to $B_i$. We conclude that $A_i \setminus B_i$ contains at least one vertex from $S$, for every $i \in [n]$. We now count the pairs $(i,u)$ with $i \in [n]$ and $u \in (A_i \setminus B_i) \cap S$. Since every $u \in S$ belongs to $2d-p$ different sets $A_i \setminus B_i$, the number of such pairs is $|S|\,(2d-p)$. As we also know that every $i \in [n]$ belongs to at least one such pair, we conclude
\begin{equation} \label{eq:4}
   |S|\,(2d-p) \ge n. 
\end{equation}
As $p < 2d$, inequalities~\eqref{eq:3} and \eqref{eq:4} imply
\begin{equation} \label{eq:5}
   \frac{n}{2d-p} \leq |S| \leq \frac{n}{p+1} \left\lfloor \frac{m}{p+1}\right\rfloor.
\end{equation}
The above findings can be combined as follows to arrive at a contradiction: 
\begin{equation*}
    \frac{m}{2d+1} \overset{\text{\eqref{eq:2}}}{<}
    \frac{p+1}{2d-p} \overset{\text{\eqref{eq:5}}}{\le}
    \left\lfloor \frac{m}{p+1}\right\rfloor \overset{\text{\eqref{eq:1}}}{=}
    \left\lfloor \frac{m}{2d+1}\right\rfloor.
    \end{equation*}
This proves that no $\gamma_d^p$-set exists in $C_m \boxtimes C_n$ and therefore $\gamma_d^p(C_m \boxtimes C_n)= \infty$ as stated. 
\qed

\section{Strong grids and prisms}
\label{sec:grids-prisms}

In this section we determine the $d$-distance $p$-packing domination number of strong grids and of strong prisms. These results demonstrate sharpness of bound of Theorem~\ref{thm:upper-for-strong}. 

\begin{theorem}
    \label{thm:equality-grids}
    If $p \leq 2d$, then $\gamma_d^p(P_m \boxtimes P_n) = \left \lceil \frac{m}{2d+1} \right \rceil \left \lceil \frac{n}{2d+1} \right \rceil$, and if $\gamma_d^p(C_n) < \infty$, then $\gamma_d^p(P_m \boxtimes C_n) = \left \lceil \frac{m}{2d+1} \right \rceil \left \lceil \frac{n}{2d+1} \right \rceil$.
\end{theorem}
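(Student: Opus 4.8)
The plan is to prove both equalities by the same two-step scheme: the upper bound comes straight from the general product bound of Theorem~\ref{thm:upper-for-strong}, while the lower bound comes from a ``banding and projection'' argument that, perhaps surprisingly, uses only the $d$-distance domination part of the definition.

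\textbf{Upper bounds.} I would first note that the standing hypothesis $p\le 2d$ applies in both parts: for the prism part it is implicit, since if $n\ge 2d+2$ then $\rad(C_n)>d$, so $\gamma_d^p(C_n)<\infty$ forces $p\le 2d$ by Observation~\ref{obs:infinity}(ii). Under $p\le 2d$, Proposition~\ref{prop:path} gives $\gamma_d^p(P_m)=\lceil m/(2d+1)\rceil$ and $\gamma_d^p(P_n)=\lceil n/(2d+1)\rceil$, so Theorem~\ref{thm:upper-for-strong} yields $\gamma_d^p(P_m\boxtimes P_n)\le\lceil m/(2d+1)\rceil\lceil n/(2d+1)\rceil$. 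For the prism, if $\gamma_d^p(C_n)<\infty$ then $\gamma_d^p(C_n)=\lceil n/(2d+1)\rceil$ --- this is Theorem~\ref{thm:cycles} when $n\ge 2d+2$, and follows from Observation~\ref{obs:infinity}(i) when $n\le 2d+1$ (both sides equal $1$) --- so Theorem~\ref{thm:upper-for-strong} again gives $\gamma_d^p(P_m\boxtimes C_n)\le\lceil m/(2d+1)\rceil\lceil n/(2d+1)\rceil$. In particular $\gamma_d^p$ is finite in both cases, so below we may fix a finite $\gamma_d^p$-set.

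\textbf{Lower bound.} Let $F\in\{P_n,C_n\}$ and identify $V(P_m\boxtimes F)$ with $[m]\times V(F)$, writing $L_i=\{i\}\times V(F)$ for $i\in[m]$. Put $k=\lceil m/(2d+1)\rceil$ and consider the indices $i_t=1+t(2d+1)$ for $t\in\{0,\dots,k-1\}$; since $(k-1)(2d+1)<m$, each $i_t$ lies in $[m]$. For each $t$ set $H_t=\bigcup_{\,i\in[m],\,|i-i_t|\le d\,}L_i$, the set of all vertices that can $d$-distance dominate some vertex of $L_{i_t}$. Because $|i_t-i_{t'}|\ge 2d+1$ for $t\ne t'$, the sets $H_0,\dots,H_{k-1}$ are pairwise disjoint. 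Now let $S$ be a finite $\gamma_d^p$-set of $P_m\boxtimes F$ and fix $t$. Every vertex $(i_t,y)$ with $y\in V(F)$ is $d$-distance dominated by some $(i,y')\in S$, and then $\max\{|i-i_t|,\,d_F(y',y)\}\le d$ forces $(i,y')\in H_t$ and $d_F(y',y)\le d$; hence the projection $\{\,y':(i,y')\in S\cap H_t\,\}$ is a $d$-distance dominating set of $F$. Since every ball $N_F^d[\cdot]$ has at most $2d+1$ vertices, any $d$-distance dominating set of $F$ has at least $\lceil n/(2d+1)\rceil$ vertices, and therefore $|S\cap H_t|\ge\lceil n/(2d+1)\rceil$. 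Summing over the $k$ pairwise disjoint sets $H_t$,
$$
|S|\ \ge\ \sum_{t=0}^{k-1}|S\cap H_t|\ \ge\ k\left\lceil\frac{n}{2d+1}\right\rceil\ =\ \left\lceil\frac{m}{2d+1}\right\rceil\left\lceil\frac{n}{2d+1}\right\rceil,
$$
which matches the upper bound and proves both equalities.

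\textbf{Expected obstacle.} The two arithmetic points --- that $i_t\in[m]$ and that the $H_t$ are disjoint --- are routine. The delicate point is the projection step: two vertices of $S\cap H_t$ may differ in their first coordinate by as much as $2d$, which can exceed $p$, so the projection need \emph{not} be a $p$-packing of $F$, and one may not invoke $\gamma_d^p(F)$ there. The remedy is to claim only that the projection is a $d$-distance dominating set of $F$ and to use the crude volume bound $\lceil n/(2d+1)\rceil$ for $d$-distance domination of $P_n$ (resp.\ $C_n$); this already suffices, as the $p$-packing structure is accounted for by the matching upper bound. As a by-product, the lower bound holds verbatim for $\gamma_d$, so $\gamma_d(P_m\boxtimes P_n)=\lceil m/(2d+1)\rceil\lceil n/(2d+1)\rceil$ as well, independently of $p$.
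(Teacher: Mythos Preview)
Your proof is correct and takes a genuinely simpler route than the paper's. Both proofs handle the upper bound via Theorem~\ref{thm:upper-for-strong} and both reduce the lower bound to $\gamma_d(P_m\boxtimes F)\ge\lceil m/(2d+1)\rceil\lceil n/(2d+1)\rceil$ via Proposition~\ref{prop:lower-bound-k-domination}. From there, however, the paper proceeds by induction on $m$: it splits $P_m\boxtimes F$ into three consecutive slabs $G_1,G_2,G_3$ of widths $d+1$, $d$, and $m-(2d+1)$, and in the nontrivial case $S\cap V(G_2)\ne\emptyset$ carries out a somewhat delicate replacement procedure---sequentially moving each vertex $v_j$ of $S\cap V(G_2)$ to a carefully chosen vertex $v_j^*$ in the boundary layer $^{2d+2}F$ so that the modified set still $d$-dominates $G_3$---before invoking the induction hypothesis on $G_3$. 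Your direct argument avoids all of this: the $k=\lceil m/(2d+1)\rceil$ anchor layers $L_{i_t}$ spaced $2d+1$ apart have pairwise disjoint $d$-neighbourhood slabs $H_t$, and projecting $S\cap H_t$ onto $F$ already forces $|S\cap H_t|\ge\lceil n/(2d+1)\rceil$ by the volume bound. This buys brevity and treats $F\in\{P_n,C_n\}$ uniformly in two lines, whereas the paper's inductive construction gains nothing extra here. One small caveat: your derivation that $p\le 2d$ is implicit in the prism hypothesis only covers $n\ge 2d+2$; for $n\le 2d+1$ one has $\gamma_d^p(C_n)=1$ regardless of $p$, so $p\le 2d$ must be read as a standing hypothesis there as well (as the theorem's layout indicates) rather than as a consequence of $\gamma_d^p(C_n)<\infty$.
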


\begin{proof}
    Let $n,m\ge 2$ and set $G = P_m\boxtimes P_n$ for the rest of the proof. We claim that $\gamma_d^p(G) \geq \gamma_d^p(P_m) \gamma_d^p (P_n) = \lceil \frac{m}{2d+1}\rceil \cdot \lceil \frac{n}{2d+1}\rceil$. (The other inequality follows from Theorem~\ref{thm:upper-for-strong}.) By Proposition~\ref{prop:lower-bound-k-domination}, $\gamma_d^p(G) \geq \gamma_d^0(G)$, thus $\gamma_d^p(G) \geq \gamma_d(G)$. Therefore it suffices to prove $\gamma_d(G) \geq \left \lceil \frac{m}{2d+1} \right \rceil \left \lceil \frac{n}{2d+1} \right \rceil$. Recall that $\gamma_d(G) = \gamma_d^0(G)$. 
    
    We proceed by induction on $m$. If $m\le 2d+1$, then the projection of a minimum $d$-distance dominating set of $G$ to the first layer of $P_n$ is a $d$-distance dominating set of $P_n$. Hence $\gamma_d(G) \ge \gamma_d(P_n)$ when $m\le 2d+1$. 

    Assume in the rest that $m\ge 2d+2$. Let $S$ be a minimum $d$-distance dominating set of $G$, and let $G_1$, $G_2$, and $G_3$ be subgraphs of $G$ induced by $[d+1]\times V(P_n)$, by $[d+2, 2d+1]\times V(P_n)$, and by $[2d+2,m] \times V(P_n)$, respectively. Note first that $|S\cap V(G_1)|\ \ge \left\lceil \frac{n}{2d+1} \right\rceil$. Indeed,  otherwise there would exist a vertex in $\{1\} \times V(P_n)$ at distance at least $d+1$ from every vertex of $S$, which is not possible as $S$ is a $d$-distance dominating set. We now distinguish two cases. 
    
    \medskip\noindent
    {\bf Case 1}: $S\cap V(G_2) = \emptyset$. \\
    In this case, $S\cap V(G_3)$ is a $d$-distance dominating set of $G_3$. Indeed, $d_G(x_3, x_1) \geq d+1$ for every $x_1 \in V(G_1)$ and $x_3 \in V(G_3)$, thus vertices from $S \cap V(G_1)$ cannot $d$-distance dominate any vertex in $G_3$. Hence, using the induction hypothesis, we get $|S\cap V(G_3)| \ge \gamma_d(G_3) = \left\lceil \frac{n}{2d+1}\right\rceil \cdot \left\lceil \frac{m-(2d+1)}{2d+1}\right\rceil$. Therefore, 
    \begin{align*}
    |S| & = |S\cap V(G_1)| + |S\cap (G_3)| \\
    & \ge \left\lceil \frac{n}{2d+1}\right\rceil + \left\lceil \frac{n}{2d+1}\right\rceil\cdot \left\lceil \frac{m-(2d+1)}{2d+1}\right\rceil = \left\lceil \frac{n}{2d+1}\right\rceil\cdot \left\lceil \frac{m}{2d+1}\right\rceil\,,    
    \end{align*}
    which settles this case.
    
    \medskip\noindent
    {\bf Case 2}: $S\cap V(G_2) \ne \emptyset$. \\
    The idea in this case is to bound the size of $S$ by constructing a $d$-distance dominating set of $G_3$ of size $|S \cap V(G_2)| + |S \cap V(G_3)|$. Suppose that $S \cap V(G_2)=\{v_1, \dots , v_\ell\}$. First, set $Z_0=S \cap V(G_3)$ and consider those vertices $(g,h)$ in $G_3$ which satisfy $d_{G}((g,h), v_1)\le d$ but  $d_{G}((g,h), (g',h'))\ge d+1$ holds for each vertex $(g',h')\in Z_0$. Let $Y_1$ be the set of these vertices. If $(g,h) \in Y_1$, then $g \in [2d+2, 3d+1]$.
    Moreover, if $(g,h) \in Y_1$ for some $g \in [2d+2,3d+1]$, 
    then $(2d+2,h) \in Y_1$. 
    
    We claim that there is a vertex $v_1^*= (2d+2, h^*)$ in $Y_1$ such that $N_G^d[v_1^*]$ covers $Y_1$. Indeed, suppose that $\{(2d+2,x),(2d+2,x+1)\} \subseteq N_G^d[v_1] $, $(2d+2,x) \in Y_1$, and $(2d+2,x+1) \notin Y_1$. Then, there exists a vertex $(g,h) \in Z_0$ such that $N_G^d[(g,h)]$ covers only $(2d+2,x+1)$. Therefore, we have $2d+2 \le g \le 3d+2$ and $h=x+d+1$, and consequently, $N_G^d[(g,h)]$ covers all vertices from $Y_1$ with second entry at least $x+1$. Similarly, if $\{(2d+2,y),(2d+2,y-1)\} \subseteq N_G^d[v_1]$, $(2d+2,y) \in Y_1$, and $(2d+2,y-1) \notin Y_1 $, then $Z_0$ contains a vertex $(g,h)$ with $2d+2 \le g \le 3d+2$ and $h=y-d-1$. We may infer that $Y_1 \cap V(^{2d+2}P_n)$ contains consecutive vertices from the layer $^{2d+2}P_n$ and therefore, we can choose a middle vertex $v_1^*$ from this intersection such that $N_G^d[v_1^*]$ covers $Y_1 \cap V(^{2d+2}P_n)$. Then, in turn, $N_G^d[v_1^*]$ covers the entire $Y_1$. 
    Let us now define $Z_1= Z_0 \cup \{v_1^*\}$. By the choice of $v_1^*$, $Y_1 \cup N_G^d[Z_0] \subseteq N_G^d[Z_1]$ holds.
    
    After a set $Z_i$ is obtained, for $1 \leq i < \ell$, we repeat the process with $v_{i+1} \in S \cap V(G_2) $ and $Z_i$. That is, we define $Y_{i+1}= (N_G^d[v_{i+1}] \setminus N_G^d[Z_i]) \cap V(G_3)$ and determine a vertex $v_{i+1}^*=(2d+2,h)$ in $Y_{i+1}$ such that $Z_{i+1}= Z_i \cup \{v_{i+1}^*\}$ and  $Y_{i+1} \cup N_G^d[Z_i] \subseteq N_G^d[Z_{i+1}]$. If $Y_j$ is empty in a step, we may proceed with $Z_j= Z_{j-1}$, but the occurrence of this situation would contradict the minimality of $S$. 
    
    At the end of the process, we have a set $Z_\ell$ in $G_3$ that contains at most $|S \cap V(G_2)|+ |S \cap V(G_3)|$ vertices. Moreover, $Z_\ell $ is a $d$-distance dominating set of $G_3$ as any vertex $u$ from $G_3$ was either covered by a vertex from $Z_0= S \cap V(G_3)$ or by a vertex $v_j \in S \cap V(G_2)$. In the latter case, $u \in N_G^d[v_j^*]$ and hence, $u \in N_G^d[Z_\ell]$ in both cases. By applying the induction hypothesis to $G_3$, we obtain
    $$|S\cap V(G_2)| + |S\cap V(G_3)| \ge  |Z_\ell| \ge \gamma_d(G_3) = \left\lceil \frac{n}{2d+1}\right\rceil\cdot \left\lceil \frac{m-(2d+1)}{2d+1}\right\rceil.
    $$
    To finish the proof, we recall that $|S\cap V(G_1)|\ \ge \left\lceil \frac{n}{2d+1} \right\rceil$, henceforth the conclusion for Case~2 is obtained as 
    \begin{align*}
                \gamma_d(G) & = |S| = |S\cap V(G_1)| + |S\cap V(G_2)| + |S\cap V(G_3)|\\
                 & \ge \left\lceil \frac{n}{2d+1}\right\rceil +  \left\lceil \frac{n}{2d+1}\right\rceil\cdot \left\lceil \frac{m-(2d+1)}{2d+1}\right\rceil =
                 \left\lceil \frac{n}{2d+1}\right\rceil\cdot \left\lceil \frac{m}{2d+1}\right\rceil.
                       \end{align*}
    This completes the proof for strong grids. An analogous argument shows that $\gamma_d^p(P_m \boxtimes C_n)= \gamma_d^p(P_m)\, \gamma_d^p(C_n) $ holds for all $m \ge 1$ and $n \ge 3$ provided that $\gamma_d^p(C_n)$ is finite. That is, for a fixed cycle $C_n$, we proceed by induction on $m$ just as above, counting is considered modulo $n$ when $v_i^*$ is determined, and taking account of the fact that $\gamma_d^p(C_n) = \left\lceil \frac{n}{2d+1} \right\rceil$. 
\end{proof}


\section{Strong toruses}
\label{sec:toruses}

In the previous section we have seen that the upper bound of Theorem~\ref{thm:upper-for-strong} is sharp for strong grids and strong prisms.  In this section we demonstrate that the equality does not hold in general and a bit surprisingly this happens already from strong products of cycles. Our first related result reads as follows. 

\begin{theorem}
\label{thm:C11(t+1)}
If $t= 0$ or $t\ge 2$, and $d=\left\lfloor \frac{5}{2}(t+1)\right\rfloor$, $p=3t+2$, then $$\gamma_{d}^{p}\left(C_{11(t+1)}\boxtimes C_{11(t+1)}\right) \le 7= \gamma_{d}^{p}\left(C_{11(t+1)}\right) \gamma_{d}^{p}\left(C_{11(t+1)}\right) -2.
$$
\end{theorem}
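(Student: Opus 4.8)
The plan has three parts: identify the right-hand side, set up a dilation (``blow-up'') reduction from $\mathbb{Z}_{11}^2$ to $\mathbb{Z}_n^2$, and exhibit a single seven-point configuration in $\mathbb{Z}_{11}^2$. Write $n=11(t+1)$ and $s=t+1$, so that $p=3t+2=3s-1$. When $s$ is odd we have $2d+1=5s$ and $\left\lceil\frac{n}{2d+1}\right\rceil=\left\lceil\frac{11}{5}\right\rceil=3$, and when $s$ is even (here $s\ge 4$, since $t=1$ is excluded) we have $2d+1=5s+1$ and $2<\frac{11s}{5s+1}\le 3$, so again $\left\lceil\frac{n}{2d+1}\right\rceil=3$. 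One checks that $0\le p\le 2d$ and $n\ge 2d+2$, while $\frac{n}{p+1}=\frac{11s}{3s}=\frac{11}{3}\ge 3=\left\lceil\frac{n}{2d+1}\right\rceil$, so Theorem~\ref{thm:cycles} gives $\gamma_d^p(C_n)=3$. Hence $\gamma_d^p(C_n)\,\gamma_d^p(C_n)-2=7$, and it remains to prove $\gamma_d^p(C_n\boxtimes C_n)\le 7$. From now on I identify $V(C_n\boxtimes C_n)$ with $\mathbb{Z}_n\times\mathbb{Z}_n$ and use $d_{C_n\boxtimes C_n}((g,h),(g',h'))=\max\{d_{C_n}(g,g'),d_{C_n}(h,h')\}$.

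The reduction is as follows. Suppose $W=\{w_1,\dots,w_7\}\subseteq\mathbb{Z}_{11}^2$ is a set that is a $2$-packing and a $2$-distance dominating set of $C_{11}\boxtimes C_{11}$. Put $c=\left\lfloor\tfrac{s-1}{2}\right\rfloor$, $v_i=s\,w_i+(c,c)\bmod n$, and $S=\{v_1,\dots,v_7\}$. Since $v_i-v_j=s(w_i-w_j)$ and $d_{C_n}(sa,sb)=s\,d_{C_{11}}(a,b)$ (because $s\,d_{C_{11}}(a,b)\le 5s<\tfrac n2$), we get $d_{C_n\boxtimes C_n}(v_i,v_j)=s\cdot d_{C_{11}\boxtimes C_{11}}(w_i,w_j)\ge 3s=p+1$, so $S$ is a $p$-packing. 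For $d$-distance domination, take $(x,y)\in\mathbb{Z}_n^2$ and write $x=sq_x+r_x$, $y=sq_y+r_y$ with $0\le r_x,r_y\le s-1$. Pick $w_i$ with $d_{C_{11}}(q_x,w_{i,1})\le 2$ and $d_{C_{11}}(q_y,w_{i,2})\le 2$. Then $x-(sw_{i,1}+c)\equiv se+(r_x-c)\pmod n$ with $|e|\le 2$ and $|r_x-c|\le\left\lfloor\tfrac s2\right\rfloor$, so $d_{C_n}(x,sw_{i,1}+c)\le 2s+\left\lfloor\tfrac s2\right\rfloor=\left\lfloor\tfrac{5s}{2}\right\rfloor=d$, and similarly in the second coordinate; hence $d_{C_n\boxtimes C_n}((x,y),v_i)\le d$. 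Thus $S$ is a $d$-distance $p$-packing dominating set of size $7$, as required (for $t=0$ one has $s=1$, $c=0$, and $S=W$).

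What remains — and this is the heart of the argument — is to produce the base configuration $W\subseteq\mathbb{Z}_{11}^2$. One choice that works is
$$W=\{(1,1),(5,2),(10,4),(4,5),(8,7),(2,8),(7,10)\},$$
whose columns and whose rows both run through $\mathbb{Z}_{11}\setminus\{0,3,6,9\}=\{1,2,4,5,7,8,10\}$, paired by a bijection that, on the index set of that seven-element cyclic sequence, is multiplication by $3$ modulo $7$; the construction is symmetric under transposition. One then verifies two things: (a) all $\binom 72=21$ pairwise distances are at least $3$; and (b) $W$ is $2$-distance dominating, which by the max-distance formula reduces to checking, for each of the $11$ columns $y_0$, that the points with column in $[y_0-2,y_0+2]$ have rows whose cyclic gaps in $\mathbb{Z}_{11}$ are all at most $5$ — equivalently, form a $2$-distance dominating set of $C_{11}$ — and likewise for rows; the multiplicative shuffle is precisely what makes all of these $22$ "consecutive triple $\mapsto$ spread triple" requirements hold simultaneously. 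The genuine obstacle is locating such a $W$ in the first place: the counting is tight (one can show any such $W$ must use at least five distinct row-values and at least five distinct column-values, so no grid, band, or single-line configuration can work), and some structural idea like the one above is needed to close the $2$-domination check for every column at once. Granting $W$, the dilation of the previous paragraph and the arithmetic of the first paragraph are routine.
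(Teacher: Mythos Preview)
Your proof is correct and follows essentially the same route as the paper: exhibit a seven-point $2$-distance $2$-packing dominating set in $C_{11}\boxtimes C_{11}$, then dilate by the factor $s=t+1$ to obtain a $d$-distance $p$-packing dominating set of $C_{11s}\boxtimes C_{11s}$, and separately verify via Theorem~\ref{thm:cycles} that $\gamma_d^p(C_{11s})=3$. The paper uses the base set $X=\{(2,0),(3,3),(4,6),(7,9),(8,1),(9,4),(10,7)\}$ and the plain scaling $X_t=\{(s i,s j):(i,j)\in X\}$, whereas you use a different (but equally valid) seven-point configuration together with an irrelevant translation by $(c,c)$; your ``multiplication by $3$ modulo $7$'' description of $W$ is a nice structural touch the paper does not offer, but the underlying argument is the same.
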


\begin{proof}
For a simpler discussion, here we consider every $n$-cycle with the vertex set $V(C_n)= [0,n-1]$ instead of $[n]$. 
We also define $G_t = C_{11(t+1)}\boxtimes C_{11(t+1)}$, for $t=0$ and $t\ge 2$. 

Consider first $G_0$, and let $X = \{(2,0), (3,3), (4,6), (7,9), (8,1), (9,4), (10,7)\}$, see Fig.~\ref{fig:C11}. 

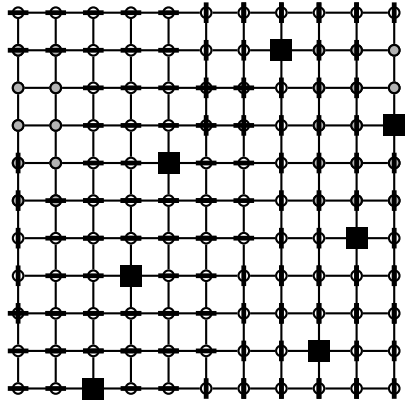
\begin{figure}[ht!]
    \begin{center}
        \begin{tikzpicture}[thick,scale=0.5]
        \tikzstyle{vertex}=[circle, draw, fill=black!10, inner sep=0pt, minimum width=4pt]
        \tikzstyle{vertexBlack}=[rectangle, draw, fill=black, inner sep=0pt, minimum width=7pt]
        \tikzstyle{vertexWhite}=[circle, draw, fill=white, inner sep=0pt, minimum width=4pt]
        \tikzstyle{vertexGray}=[circle, draw, fill=black!30, inner sep=0pt, minimum width=4pt]
        \tikzstyle{vertexBlack2}=[rectangle, rotate=-90, draw, fill=black, inner sep=0pt, minimum width=7pt]
        
        \pgfmathtruncatemacro{\N}{5}
        \pgfmathtruncatemacro{\R}{1}

        \foreach \x in {0,...,10}
            \foreach \y in {0,...,10}
            \node[vertexWhite] (\x+\y) at (\x,\y) {};

        \foreach \y in {0,...,10}
            \foreach \x [remember=\x as \lastx (initially 0)] in {1,...,10}
            \path (\x+\y) edge (\lastx+\y);

        \foreach \x in {0,...,10}
            \foreach \y [remember=\y as \lasty (initially 0)] in {1,...,10}
            \path (\x+\y) edge (\x+\lasty);

        \foreach \x in {8,9,10,0,1}
            \foreach \y in {5,...,9}
                \node[vertexGray] (\x+\y) at (\x,\y) {};

        \foreach \x in {0,...,4}
            \foreach \y in {0,1,2,9,10}
                \node[vertexBlack] (\x+\y) at (\x,\y) {};

        \foreach \x in {1,...,5}
            \foreach \y in {1,...,5}
                \node[vertexBlack] (\x+\y) at (\x,\y) {};

        \foreach \x in {2,...,6}
            \foreach \y in {4,...,8}
                \node[vertexBlack] (\x+\y) at (\x,\y) {};

        \foreach \x in {7,8,9,10,0}
            \foreach \y in {2,...,6}
                \node[vertexBlack2] (\x+\y) at (\x,\y) {};

        \foreach \x in {6,...,10}
            \foreach \y in {0,1,2,3,10}
                \node[vertexBlack2] (\x+\y) at (\x,\y) {};

        \foreach \x in {5,...,9}
            \foreach \y in {7,...,10,0}
                \node[vertexBlack2] (\x+\y) at (\x,\y) {};

        \node[fill=black] (a) at (2,0) {};
        \node[fill=black] (b) at (3,3) {};
        \node[fill=black] (c) at (4,6) {};
        \node[fill=black] (d) at (7,9) {};
        \node[fill=black] (e) at (8,1) {};
        \node[fill=black] (f) at (9,4) {};
        \node[fill=black] (f) at (10,7) {};

        \end{tikzpicture}
        \caption{A schematic representation of $C_{11} \boxtimes C_{11}$; vertices from $X$ are marked with black squares. For clarity, not all edges are drawn. From left to right, the vertices which are $2$-distance dominated by the first three vertices from $X$ are marked with vertical black strips, the vertices which are $2$-distance dominated by the next three with horizontal black strips, and the vertices which are $2$-distance dominated by the final one with color gray.}
        \label{fig:C11}
    \end{center}
\end{figure}

If $t=0$, then $d=p=2$ and it can be checked that $X$ forms a $2$-distance $2$-packing set of $G_0$. It follows that the inequality holds true for $t=0$. For $t\ge 2$, set 
$$X_t = \{(t+1)i, (t+1)j):\ (i,j)\in X\}\,,$$
and consider $X_t$ as a subset of vertices of $G_t$. 

Intuitively, we can think of $X_t$ as a set of vertices obtained from $X$ by adding $t$ additional layers cyclically between each two consecutive horizontal layers and each two vertical consecutive layers of $G_0$. From this description we infer that if $(i,j), (i',j')\in X$, and $d_{G_0}((i,j), (i',j')) = \ell$, then 
$$d_{G_t}((t+1)i, (t+1)j), ((t+1)i', (t+1)j')) = (t+1)\ell\,.$$ 
Since $d_{G_0}((i,j), (i',j'))\ge 3$, it follows that 
$$d_{G_t}((t+1)i, (t+1)j), ((t+1)i', (t+1)j')) \ge 3(t+1)=p+1\,.$$ 
We can conclude that $X_t$ is a $p$-packing of $G_t$. 

Let $(i'',j'')$ be an arbitrary vertex of $G_0$. Then there exists $(i,j)\in X$ such that $d_{G_0}((i,j), (i'',j'')) \le 2$. If follows that 
$$d_{G_t}((t+1)i, (t+1)j), ((t+1)i'', (t+1)j'')) \le 2(t+1)\,.$$ 

Since every vertex of $G_t$ is at distance at most $\lfloor\frac{t+1}{2}\rfloor$ from a vertex of the form $((t+1)i'', (t+1)j''))$, we get that every vertex of $G_t$ is at distance at most $2(t+1) + \lfloor\frac{t+1}{2}\rfloor= d$ from some vertex of $X_t$. Hence $X_t$ is a $d$-distance dominating set, and since $|X_t|=7$, we conclude  
$$\gamma_{d}^{p}\left(C_{11(t+1)}\boxtimes C_{11(t+1)}\right) \le |X_t| =7.
$$
\medskip

For the second part of the statement, we can make the following observations for every $t\neq 1$. If $t$ is odd, then $t \ge 3$ and 
$$ \left\lceil \frac{11t+11}{2d+1} \right\rceil = \left\lceil \frac{11t+11}{5(t+1)+1} \right\rceil =3.
$$
Therefore,
\begin{equation} \label{eq:C11}
    \frac{11t+11}{p+1}= \frac{11t+11}{3t+3} > \left\lceil \frac{11t+11}{2d+1} \right\rceil = 3,
\end{equation}
and Theorem~\ref{thm:cycles} implies 
$\gamma_d^p(C_{11(t+1)})=3$.

Similarly, if $t$ is even, then 
$$ \left\lceil \frac{11t+11}{2d+1} \right\rceil = \left\lceil \frac{11t+11}{(5t+4)+1} \right\rceil =3,
$$
and inequality~\eqref{eq:C11} holds true again.
Theorem~\ref{thm:cycles} then implies 
$\gamma_d^p(C_{11(t+1)})=3$ as before, which finishes the proof of the theorem.
\end{proof}

\noindent
We would like to make the following comments on Theorem~\ref{thm:C11(t+1)}. 
\begin{remark}
The case $t=1$ is special and hence excluded from the theorem because in this case we have $\gamma_5^5(C_{22}) = 2$.

By Theorem~\ref{thm:C11(t+1)}, $\gamma_2^2(C_{11}\boxtimes C_{11})\le 7$. On the other hand, we have checked by computer that no smaller $2$-distance $2$-packing set of $C_{11}\boxtimes C_{11}$ exists, hence we actually have $\gamma_2^2(C_{11}\boxtimes C_{11}) = 7$.

Our conclusion, by Theorem~\ref{thm:C11(t+1)}, is that the strong products $\gamma_{d}^{p}\left(C_{11(t+1)}\boxtimes C_{11(t+1)}\right)$ with $t\neq 1$, $d=\left\lfloor \frac{5}{2}(t+1)\right\rfloor$, and $p=3t+2$ form an infinite family of cases for which the upper bound of Theorem~\ref{thm:upper-for-strong} is not sharp and the difference is at least two.
\end{remark}

Moreover, we have the following infinite family of graphs that shows that the difference $\gamma_2^2(G)  \gamma_2^2(H) - \gamma_2^2(G \boxtimes H)$ can be 2. Again, we take $V(C_n) = [0, n-1]$. Let $N = 55k + 11$, $k \geq 0$. For $C_N \boxtimes C_{11}$, Theorem~\ref{thm:upper-for-strong} gives
$$\gamma_2^2(C_N \boxtimes C_{11}) \leq \left \lceil \frac{11}{5} \right \rceil \left \lceil \frac{55k+11}{5} \right \rceil = 3 (11k+3) = 33k+9.$$ But it turns out that this bound is not attained. Consider 
$$S = \bigcup_{i=1}^{11k+2} \{ (5i-3, 9i-9), (5i-2, 9i-6), (5i-1, 9i-3) \} \cup \{ (N-1, 7) \}.$$
It is not difficult to see that $S$ is a $2$-distance dominating set and also a 2-packing of $C_N \boxtimes C_{11}$ (see Fig.~\ref{fig:family-2}), thus
$$\gamma_2^2(C_N \boxtimes C_{11}) \leq 3(11k+1) + 4 = 33k + 7.$$

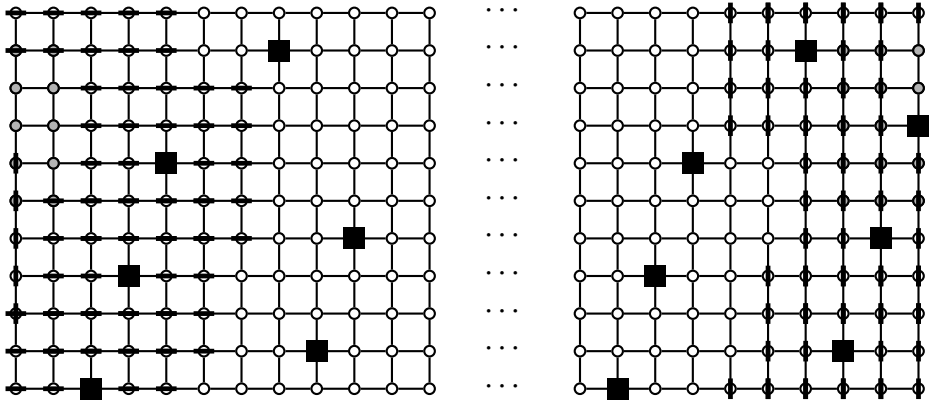
\begin{figure}[!ht]
    \begin{center}
        \begin{tikzpicture}[thick,scale=0.5]
        \tikzstyle{vertex}=[circle, draw, fill=black!10, inner sep=0pt, minimum width=4pt]
        \tikzstyle{vertexBlack}=[rectangle, draw, fill=black, inner sep=0pt, minimum width=7pt]
        \tikzstyle{vertexWhite}=[circle, draw, fill=white, inner sep=0pt, minimum width=4pt]
        \tikzstyle{vertexGray}=[circle, draw, fill=black!30, inner sep=0pt, minimum width=4pt]
        \tikzstyle{vertexBlack2}=[rectangle, rotate=-90, draw, fill=black, inner sep=0pt, minimum width=7pt]
        
        \pgfmathtruncatemacro{\N}{5}
        \pgfmathtruncatemacro{\R}{1}

        \foreach \x in {0,...,11}
            \foreach \y in {0,...,10}
            \node[vertexWhite] (\x+\y) at (\x,\y) {};

        \foreach \y in {0,...,10}
            \foreach \x [remember=\x as \lastx (initially 0)] in {1,...,11}
            \path (\x+\y) edge (\lastx+\y);

        \foreach \x in {0,...,11}
            \foreach \y [remember=\y as \lasty (initially 0)] in {1,...,10}
            \path (\x+\y) edge (\x+\lasty);

        \foreach \y in {0,...,10}
            \node (13+\y) at (13, \y) {$\cdots$};
            
        \foreach \x in {15,...,24}
            \foreach \y in {0,...,10}
            \node[vertexWhite] (\x+\y) at (\x,\y) {};

        \foreach \y in {0,...,10}
            \foreach \x [remember=\x as \lastx (initially 15)] in {16,...,24}
            \path (\x+\y) edge (\lastx+\y);

        \foreach \x in {15,...,24}
            \foreach \y [remember=\y as \lasty (initially 0)] in {1,...,10}
            \path (\x+\y) edge (\x+\lasty);
        
        
        \foreach \x in {22,23,24,0,1}
            \foreach \y in {5,...,9}
                \node[vertexGray] (\x+\y) at (\x,\y) {};

        \foreach \x in {0,...,4}
            \foreach \y in {0,1,2,9,10}
                \node[vertexBlack] (\x+\y) at (\x,\y) {};

        \foreach \x in {1,...,5}
            \foreach \y in {1,...,5}
                \node[vertexBlack] (\x+\y) at (\x,\y) {};

        \foreach \x in {2,...,6}
            \foreach \y in {4,...,8}
                \node[vertexBlack] (\x+\y) at (\x,\y) {};

        \foreach \x in {21,...,24,0}
            \foreach \y in {2,...,6}
                \node[vertexBlack2] (\x+\y) at (\x,\y) {};

        \foreach \x in {20,...,24}
            \foreach \y in {0,1,2,3,10}
                \node[vertexBlack2] (\x+\y) at (\x,\y) {};

        \foreach \x in {19,...,23}
            \foreach \y in {7,...,10,0}
                \node[vertexBlack2] (\x+\y) at (\x,\y) {};

        \node[fill=black] (a) at (2,0) {};
        \node[fill=black] (b) at (3,3) {};
        \node[fill=black] (c) at (4,6) {};
        \node[fill=black] (d) at (7,9) {};
        \node[fill=black] (e) at (8,1) {};
        \node[fill=black] (f) at (9,4) {};

        \node[fill=black] (g) at (24,7) {};
        \node[fill=black] (h) at (23,4) {};
        \node[fill=black] (i) at (22,1) {};
        \node[fill=black] (j) at (21,9) {};
        \node[fill=black] (k) at (18,6) {};
        \node[fill=black] (l) at (17,3) {};
        \node[fill=black] (m) at (16,0) {};

        \end{tikzpicture}
        \caption{A schematic representation of $C_N \boxtimes C_{11}$; vertices from $S$ are marked with black squares. For clarity, not all edges are drawn. Vertices which are $2$-distance dominated by the first triple are marked with horizontal black strips, vertices which are $2$-distance dominated by the last triple with vertical black strips, and vertices which are $2$-distance dominated by the final vertex with gray color. The remaining white vertices are $2$-distance dominated by the remaining triples.}
        \label{fig:family-2}
    \end{center}
\end{figure}

The next proposition shows that, for every $d$ and $p$ with $p \leq d$, there is an infinite class of strong toruses such that $\gamma_d^p(C_m \boxtimes C_n) < \gamma_d^p(C_m)\, \gamma_d^p(C_m)$.

\begin{theorem} \label{thm:torus-minus-1}
    Let $m$, $n$, $d$, and $p$ be positive integers such that $d \ge p$, $m \ge 2d+2$, and $n \ge 4d+3$. If both $m\! \pmod{2d+1}$ and $n\! \pmod{2d+1}$ are from $[d]$, then
    $$ \gamma_d^p(C_m \boxtimes C_n) \le  \gamma_d^p(C_m)\, \gamma_d^p(C_n)-1.
    $$
\end{theorem}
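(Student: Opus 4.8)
The plan is to establish the upper bound by an explicit construction that generalises the diagonal ``staircase'' exhibited above for $C_N\boxtimes C_{11}$. Put $D=2d+1$, write $m=Da+r$ and $n=Db+s$ with $r,s\in[d]$, and recall that by Theorem~\ref{thm:cycles} (whose finiteness hypothesis holds since $p\le d$) one has $\gamma_d^p(C_m)=a+1$ and $\gamma_d^p(C_n)=b+1$; the hypotheses $m\ge 2d+2$ and $n\ge 4d+3$ just say $a\ge 1$ and $b\ge 2$. So it suffices to exhibit a $d$-distance $p$-packing set $S$ of $C_m\boxtimes C_n$ with $|S|=(a+1)(b+1)-1=(a+1)b+a$. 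Identifying $C_m\boxtimes C_n$ with the torus $\mathbb{Z}_m\times\mathbb{Z}_n$, in which $N^d[(x,y)]$ is the $D\times D$ box centred at $(x,y)$, the task is to cover the torus by $(a+1)(b+1)-1$ such boxes whose centres are pairwise at $\ell_\infty$-distance at least $p+1$ (a mild requirement, as $p\le d$). One first notes that merely deleting a vertex from a product grid $(\gamma_d^p\text{-set of }C_m)\times(\gamma_d^p\text{-set of }C_n)$ cannot work, since removing a grid vertex leaves a two-dimensional uncovered region while each factor genuinely needs $a+1$, respectively $b+1$, dominators; a skew construction is needed.

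I would build $S$ from $b$ ``bundles'' $B_1,\dots,B_b$ sweeping once around $C_n$, together with $a$ ``patch'' vertices. Each $B_i$ consists of $a+1$ vertices on a short diagonal segment: its $C_n$-coordinates are the $a+1$ consecutive values ending at $Di-1$, and its $C_m$-coordinates form an arithmetic progression of common difference $D$, the progressions being aligned so that, reading the vertices in the order $B_1,B_2,\dots$, their $C_m$-coordinates run through a single arithmetic progression of difference $D$ (equivalently, $B_{i+1}$ is the translate of $B_i$ by $a+1$ steps along the diagonal). Consecutive bundles then occupy $C_n$-column blocks separated by $2d-a$ columns, and after $b$ bundles exactly $n-Db=s$ columns of $C_n$ remain, forming a strip $W$ of $s\le d$ consecutive columns. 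Since the $a+1$ $C_m$-coordinates of a bundle step by $D$ and $(a+1)D\ge m$, they $d$-dominate all of $\mathbb{Z}_m$, so each $C_n$-column lying ``inside'' a bundle is $d$-dominated by that bundle alone; a column-by-column computation (in the same spirit as the $C_{11}$ verification above, and using $(a+1)D\ge m$ once more) shows each of the $2d-a$ columns between two consecutive bundles is $d$-dominated by the two flanking bundles jointly. Finally I place the $a$ patch vertices inside (and near) $W$, at $C_m$-rows chosen so that, in every column not already covered, they and the fading reaches of $B_b$ and $B_1$ together $d$-dominate all of $\mathbb{Z}_m$; using $r,s\le d$ one checks the still-uncovered set in each such column is an interval of length at most $aD$, so $a$ patch vertices suffice, and $|S|=(a+1)b+a$.

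Checking that $S$ is a $p$-packing is easy because $p\le d$: inside a bundle consecutive vertices differ by $(1,D)$ in the $(C_n,C_m)$-coordinates, consecutive bundles meet with difference $(2d-a+1,D)$, non-consecutive bundles are more than $D$ columns of $C_n$ apart, and the patch vertices sit at $C_m$-rows at $C_m$-distance $\ge d+1$ from the rows of $B_b$; in each of these cases the $\ell_\infty$-distance is at least $\min(D,d+1)=d+1\ge p+1$. The only delicate pairs are $B_1$ versus the patch vertices, which are close in the $C_n$-coordinate, so one must fix the global position of the staircase so that the patch $C_m$-rows stay $\ge d+1$ from the $C_m$-rows of $B_1$.

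That last point is the main obstacle: the $C_m$-rows of $B_b$ equal those of $B_1$ translated by $(b-1)(a+1)D\bmod m$, and one must arrange simultaneously that $B_b$ and $B_1$ leave a short, coverable gap near the wrap-around, that the $a$ patch vertices are $p$-far from both $B_b$ and $B_1$, and that no two bundles clash; a secondary technicality is that when $a>d$ a bundle is wider than $D$, so its shape must be modified (or, when both factors are large, the factor with the smaller quotient is taken as $C_m$). I expect the cleanest way to dispatch all of this is to induct on $b$: treat the base case $b=2$ (i.e.\ $n=2D+s$) by writing the three bundles and the $a$ patch vertices out explicitly, and for the inductive step prove $\gamma_d^p(C_m\boxtimes C_n)\le\gamma_d^p(C_m\boxtimes C_{n-D})+\gamma_d^p(C_m)$ by a layer-insertion argument — take a $\gamma_d^p$-set of $C_m\boxtimes C_{n-D}$, insert a block of $D$ fresh $C_m$-layers at a cut across which no existing $d$-domination passes, and adjoin a fresh $\gamma_d^p(C_m)$-set in the middle new layer, which lies at distance $\ge d+1$ from every old layer, so the $p$-packing property survives. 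Since removing $D$ from $n$ leaves $r$ and $s$ unchanged the hypotheses persist all the way down to $b=2$; the parts I expect to need the most care are proving that a suitable cut always exists and carrying out the base case for general $a$.
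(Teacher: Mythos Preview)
Your approach is quite different from the paper's and, as you yourself flag, leaves several genuine gaps.

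The diagonal ``bundle'' construction runs into real trouble. The gap between consecutive bundles is $2d-a$ columns, which is nonnegative only when $a\le 2d$; for larger $m$ the bundles overlap and the whole picture must change (you note this but do not say how). Even for small $a$, the wrap-around alignment you call ``the main obstacle'' is exactly that: the $C_m$-rows of $B_1$ and $B_b$ differ by $(b-1)(a+1)D\bmod m$, a quantity over which you have no control, and placing $a$ patch vertices so that they are simultaneously $p$-far from both $B_1$ and $B_b$ while still covering the residual strip is not something one can wave through. Your fallback inductive step $\gamma_d^p(C_m\boxtimes C_n)\le\gamma_d^p(C_m\boxtimes C_{n-D})+\gamma_d^p(C_m)$ hinges on finding, in an \emph{arbitrary} $\gamma_d^p$-set of the smaller torus, a cut of $C_{n-D}$ across which no $d$-domination passes; you offer no argument for its existence, and there is no reason one should exist in general. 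Tracking a specific set through the induction might help, but then you are essentially back to making the direct construction work.

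By contrast, the paper's proof is short, explicit, and involves neither induction nor any diagonal. After reducing to $p=d$, it lays down an $m_1\times n_1$ grid $Z=\{((2d+1)i-d,\,(2d+1)j-d):i\in[m_1],\,j\in[n_1]\}$, which $d$-dominates the rectangle $[1,m-q]\times[1,n-r]$; adds a shifted row $A$ of $m_1$ vertices along column $n$ and a shifted column $B$ of $n_1-1$ vertices along row $m$ to cover the two leftover strips; drops in a single corner vertex $c^*=(m-d,\,n-r)$ to cover the remaining block; and finally nudges the one grid vertex $z(m_1,n_1)$ by a single step to repair the lone $d$-packing violation with $c^*$. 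The total is $m_1n_1+m_1+(n_1-1)+1=(m_1+1)(n_1+1)-1$, and every distance check is immediate from the coordinates. The hypotheses $q,r\in[d]$ are precisely what make one corner vertex suffice, and $n_1\ge 2$ is used once, to keep $a(m_1)$ far from the nudged vertex. This grid-plus-boundary construction sidesteps every obstacle on your list; I would abandon the staircase in favour of it.
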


\proof
Under the conditions in the theorem, we can write $m$ and $n$ in the form
$$m =m_1(2d+1)+q \enskip \mbox{and} \enskip n =n_1(2d+1)+r,
$$
where $m_1= \lfloor\frac{m}{2d+1}\rfloor$, $n_1= \lfloor\frac{n}{2d+1}\rfloor$, and $q, r \in [d]$. 
 
The conditions $m \ge 2d+2$ and $n \ge 4d+3$ imply $m_1\ge 1$ and $n_1 \ge 2$.
By Theorem~\ref{thm:cycles}, $\gamma_d^p(C_m)=m_1+1$ and $\gamma_d^p(C_n)=n_1+1$ for all $p \leq d$. As Proposition~\ref{prop:lower-bound-k-domination} shows $\gamma_d^p(C_m \boxtimes C_n) \leq \gamma_d^d(C_m \boxtimes C_n)$ whenever $p \leq d$, it is enough to prove the statement for $p=d$. So our aim is to construct a $d$-distance $d$-packing set $S$ in $C_m \boxtimes C_n$ that contains $m_1n_1 +m_1 +n_1$ vertices.
\medskip

We define the following vertices and vertex sets, for $i\in [m_1]$ and $j \in [n_1]$:
\begin{itemize}
  \item $z(i,j) =(i(2d+1)-d, j(2d+1)-d),$
  \item $a(i) =(i(2d+1)-2d, n)$ \enskip and \enskip  $b(j) =(m,j(2d+1)),$
  \item $c^* =(m-d, n-r);$
  \item $z^* =(m_1(2d+1)-d, n_1(2d+1)-d-1) = (m-q-d,n-r-d-1),$
  \item $Z =\{ z(i,j) \colon i\in [m_1], j \in [n_1]\},$
  \item $A =\{ a(i) \colon i\in [m_1]\}$, \enskip and \enskip $B =\{ b(j) \colon j\in [n_1-1]\}.$
\end{itemize}
For example, see Fig.~\ref{fig:C16}. It can be readily checked that $A\cup B \cup Z$ is a $d$-packing. The set $Z$ clearly $d$-distance dominates the vertices in $[1, m-q]\times [1, n-r]$. For the remaining vertices, we first observe that the set 
$$([1, m-q-d] \times [n-r+1,n])  \cup ([m-q+1, m] \times [1,n-r-d-1]) 
$$
is $d$-distance dominated by $A\cup B$. In particular, the part $[m-q+1, m] \times [1,d]$ is dominated by  $a(1)$. The vertices that remain undominated by $Z \cup A \cup B$ are all contained in $Y =[m-q-d+1,m] \times [n-r-d, n]$. Since $q \le d$ and $r \leq d$, we have $Y \subseteq [m-2d, m] \times [n-r-d, n]$ and 
$c^*$ can $d$-distance dominate the entire $Y$.
We infer that $S'=Z \cup A \cup B \cup\{c^*\}$ is a $d$-distance dominating set in $C_m\boxtimes C_n$ and its size is $|S'|=m_1n_1+m_1+n_1$.
\medskip

We have already observed that $S'\setminus \{c^*\}$ is a $p$-packing. Further, for every $v \in S'\setminus \{z(m_1,n_1)\}$, the distance $d_{C_m\boxtimes C_n}(c^*,v)$ is at least $d+1$. However,    $$d_{C_m\boxtimes C_n}(c^*,z(m_1,n_1))= (n-r)-(n_1(2d+1)-d) =d
$$ 
that is not large enough for a $d$-packing.
Therefore, we finish the construction by replacing $z(m_1, n_1)$ with the neighbor $z^*$ in the set. Let $S=S' \setminus \{z(m_1,n_1)\} \cup \{z^*\}$. This way $[m]\times [n]$ remains $d$-distance dominated, $d_{C_m\boxtimes C_n}(c^*,z^*)=d+1$, and the distance of $z^*$ and any other vertex in $S$ is at least $d+1$. Observe that the condition $n_1 \ge 2$ ensures $d_{C_m\boxtimes C_n}(a(m_1),z^*) >d+1$. Thus $S$ is a $d$-distance $d$-packing set with $|S|= |S'|=(m_1+1)(n_1+1)-1$, and the statement follows.
\qed

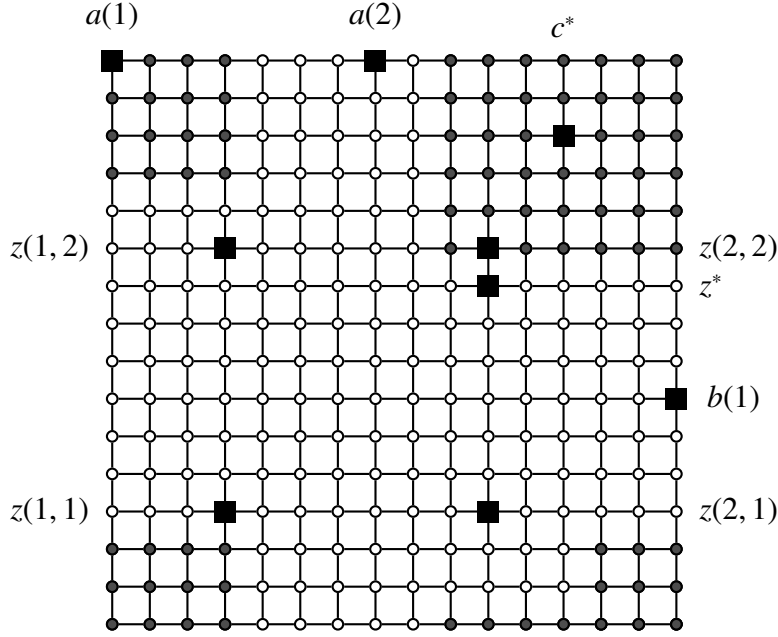
\begin{figure}[!ht]
    \begin{center}
        \begin{tikzpicture}[thick,scale=0.5]
        \tikzstyle{vertex}=[circle, draw, fill=black!10, inner sep=0pt, minimum width=4pt]
        \tikzstyle{vertexBlack}=[rectangle, draw, fill=black, inner sep=0pt, minimum width=7pt]
        \tikzstyle{vertexWhite}=[circle, draw, fill=white, inner sep=0pt, minimum width=4pt]
        \tikzstyle{vertexGray}=[circle, draw, fill=black!70, inner sep=0pt, minimum width=4pt]
        \tikzstyle{vertexBlack2}=[rectangle, rotate=-90, draw, fill=black, inner sep=0pt, minimum width=7pt]
        
        \pgfmathtruncatemacro{\N}{5}
        \pgfmathtruncatemacro{\R}{1}

        \foreach \x in {0,...,15}
            \foreach \y in {0,...,15}
            \node[vertexWhite] (\x+\y) at (\x,\y) {};

        \foreach \y in {0,...,15}
            \foreach \x [remember=\x as \lastx (initially 0)] in {1,...,15}
            \path (\x+\y) edge (\lastx+\y);

        \foreach \x in {0,...,15}
            \foreach \y [remember=\y as \lasty (initially 0)] in {1,...,15}
            \path (\x+\y) edge (\x+\lasty);

        \foreach \x in {9,...,15}
            \foreach \y in {0,10,11,12,13,14,15}
                \node[vertexGray] (\x+\y) at (\x,\y) {};

        \foreach \x in {0,...,3}
            \foreach \y in {12,...,15}
                \node[vertexGray] (\x+\y) at (\x,\y) {};

        \foreach \x in {0,1,2,3,13,14,15}
            \foreach \y in {0,...,2}
                \node[vertexGray] (\x+\y) at (\x,\y) {};

        \node[fill=black, label={[label distance = 1.5cm]left:$z(1,1)$}] (z11) at (3,3) {};
        \node[fill=black, label={[label distance = 2.5cm]right:$z(2,1)$}] (z12) at (10,3) {};
        \node[fill=black, label={[label distance = 1.5cm]left:$z(1,2)$}] (z21) at (3,10) {};
        \node[fill=black, label={[label distance = 2.5cm]right:$z(2,2)$}] (z22) at (10,10) {};
        \node[fill=black, label={[label distance = 2.5cm]right:$z^*$}] (z*) at (10,9) {};
        \node[fill=black, label={[label distance = 0.1cm]above:$a(1)$}] (a1) at (0,15) {};
        \node[fill=black, label={[label distance = 0.1cm]above:$a(2)$}] (a2) at (7,15) {};
        \node[fill=black, label={[label distance = 0.1cm]right:$b(1)$}] (b1) at (15,6) {};
        \node[fill=black, label={[label distance = 1cm]above:$c^*$}] (c*) at (12,13) {};

        \end{tikzpicture}
        \caption{A schematic representation of $C_{16} \boxtimes C_{16}$, i.e.\ $m = n = 16$, $d = p = 3$, $q = r = 2$ and $m_1 = n_1 = 2$. Vertices from $Z \cup A \cup B \cup \{z^*, c^*\}$ are marked with black squares and their labels are written on the side of the graph. For clarity, not all edges are drawn. The vertices $d$-distance dominated by $a(1)$ and by $c^*$ are marked with color gray.}
        \label{fig:C16}
    \end{center}
\end{figure}
\medskip

At the end of this section, we prove a lower bound on $\gamma_d^p(C_m \boxtimes C_n)$ and identify some cases when Theorem~\ref{thm:upper-for-strong} holds with equality for strong toruses that is,
$ \gamma_d^p(C_m \boxtimes C_n) =  \gamma_d^p(C_m)\, \gamma_d^p(C_n)$.

\begin{theorem} \label{thm:eq-for-toruses}
For every two nonnegative integers $d$ and $p$ and for every two cycles $C_m$ and $C_n$, the following statements hold.
\begin{itemize}
    \item[$(i)$] $\left\lceil \frac{m}{2d+1} \right\rceil \frac{n}{2d+1} \le \gamma_d^p(C_m \boxtimes C_n)$.
    \item[$(ii)$] If $n\equiv 0 \pmod{2d+1}$, then $ \gamma_d^p(C_m \boxtimes C_n) = \gamma_d^p(C_m)\, \gamma_d^p(C_n)$.
    \item[$(iii)$] If $n\equiv r \pmod{2d+1}$ and $(1-\frac{r}{2d+1}) \left\lceil \frac{m}{2d+1} \right\rceil<1$, then $ \gamma_d^p(C_m \boxtimes C_n) = \gamma_d^p(C_m)\, \gamma_d^p(C_n)$. In particular, the equality holds if $n\equiv 2d \pmod{2d+1}$ and $m \le 4d^2+2d$, and also if $n\equiv 2d-1 \pmod{2d+1}$ and $m \le 2d^2+d$.
  \end{itemize}    
\end{theorem}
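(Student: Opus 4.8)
The plan is to prove the lower bound $(i)$ by a double-counting argument modeled on the proof of Theorem~\ref{thm:strong-torus-infinity}, and then to derive $(ii)$ and $(iii)$ by pairing $(i)$ with the upper bound of Theorem~\ref{thm:upper-for-strong} together with the explicit values of $\gamma_d^p(C_m)$ and $\gamma_d^p(C_n)$ supplied by Theorem~\ref{thm:cycles} and Observation~\ref{obs:infinity}.

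For $(i)$, I would assume $\gamma_d^p(C_m \boxtimes C_n) < \infty$ (otherwise there is nothing to prove) and fix a $\gamma_d^p$-set $S$. As in the proof of Theorem~\ref{thm:strong-torus-infinity}, write $R_i$ for the layer $(C_m)^i$ and, for $n \ge 2d+1$ (the cases $n \le 2d$ being immediate), set $A_i = \bigcup_{j=i-d}^{i+d} R_j$, a union of $2d+1$ consecutive rows. Since every vertex of $R_i$ must be $d$-distance dominated by a vertex of $A_i \cap S$, projecting $A_i \cap S$ onto $R_i$ gives a $d$-distance dominating set of the $m$-cycle $R_i$, so $|A_i \cap S| \ge \gamma_d^0(C_m) = \lceil \frac{m}{2d+1} \rceil$. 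Double counting the pairs $(i,v)$ with $v \in A_i \cap S$, and noting that each $v \in S$ lies in exactly $2d+1$ of the sets $A_i$, then yields $(2d+1)\,|S| \ge n \lceil \frac{m}{2d+1} \rceil$, which is $(i)$.

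For $(ii)$ and $(iii)$ I would write $n = n_1(2d+1) + r$ with $0 \le r \le 2d$ and set $K = \lceil \frac{m}{2d+1} \rceil$. By Theorem~\ref{thm:cycles} and Observation~\ref{obs:infinity}, each of $\gamma_d^p(C_m)$ and $\gamma_d^p(C_n)$ is either its respective ceiling or $\infty$; if either is infinite, then $\gamma_d^p(C_m \boxtimes C_n) = \infty$ by Theorem~\ref{thm:strong-torus-infinity} and the asserted equality is trivial, so one may assume both are finite and equal to their ceilings. In case $(ii)$, $\frac{n}{2d+1}$ is an integer, so $(i)$ gives $\gamma_d^p(C_m \boxtimes C_n) \ge K \cdot \frac{n}{2d+1} = \gamma_d^p(C_m)\,\gamma_d^p(C_n)$, and Theorem~\ref{thm:upper-for-strong} gives the reverse inequality. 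In case $(iii)$, $(i)$ gives $\gamma_d^p(C_m \boxtimes C_n) \ge K n_1 + \frac{Kr}{2d+1}$; since the left side and $K n_1$ are integers, this improves to $\gamma_d^p(C_m \boxtimes C_n) \ge K n_1 + \lceil \frac{Kr}{2d+1} \rceil$, and the hypothesis $(1 - \frac{r}{2d+1})K < 1$ rewrites as $\frac{Kr}{2d+1} > K-1$, which forces $\lceil \frac{Kr}{2d+1} \rceil \ge K$ and hence $\gamma_d^p(C_m \boxtimes C_n) \ge K(n_1+1) = \gamma_d^p(C_m)\,\gamma_d^p(C_n)$; again Theorem~\ref{thm:upper-for-strong} closes the gap. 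The two ``in particular'' cases come out by substituting $r = 2d$ and $r = 2d-1$, where the condition becomes $K < 2d+1$ (equivalently $m \le 2d(2d+1) = 4d^2+2d$) and $2K < 2d+1$ (equivalently $K \le d$, i.e.\ $m \le d(2d+1) = 2d^2+d$).

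Since $(i)$ only adapts the counting already carried out for Theorem~\ref{thm:strong-torus-infinity}, I expect it to be routine; the delicate points will be the ceiling step in $(iii)$ --- making sure the strict inequality $\frac{Kr}{2d+1} > K-1$ is invoked correctly to conclude $\lceil \frac{Kr}{2d+1} \rceil \ge K$ --- and the uniform treatment of the degenerate cases in which $\gamma_d^p(C_m)$ or $\gamma_d^p(C_n)$ is infinite or a cycle has radius at most $d$.
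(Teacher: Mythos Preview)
Your proposal is correct and follows essentially the same approach as the paper: the same double-counting over the row-bands $A_i$ for $(i)$, and the same comparison of the lower bound from $(i)$ with the upper bound from Theorem~\ref{thm:upper-for-strong} (using Theorem~\ref{thm:strong-torus-infinity} to dispose of the infinite cases) for $(ii)$ and $(iii)$. The only cosmetic difference is in $(iii)$: the paper shows directly that the gap between the upper and lower bounds is strictly less than $1$, whereas you round the lower bound up via $\lceil Kr/(2d+1)\rceil \ge K$; these are equivalent, and your derivation of the two ``in particular'' cases matches the paper's.
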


\proof
  By Theorems~\ref{thm:upper-for-strong} and~\ref{thm:strong-torus-infinity}, the statements are valid when $ \gamma_d^p(C_m \boxtimes C_n)=\infty$. Hence, we may assume that $C_m \boxtimes C_n$ has a $\gamma_d^p$-set $S$ and further that $\gamma_d^p(C_m)=\lceil \frac{m}{2d+1} \rceil$ and $\gamma_d^p(C_n)=\lceil \frac{n}{2d+1} \rceil$.
  \medskip
  
  (i) Let $R_i = \{(x,i) \colon x \in [m]\}$ and define $A_i=\bigcup_{j=i-d}^{i+d} R_j$ for every $i \in [n]$, as in the proof of Theorem~\ref{thm:strong-torus-infinity} (see Fig.~\ref{fig:sets-in-product-of-cycles}). Since the vertices in $R_i$ are all $d$-distance dominated by the vertices in $S\cap A_i$, we observe that $|S\cap A_i| \ge \lceil \frac{m}{2d+1}\rceil$ for every $i\in [n]$. Counting the pairs $(i,v)$ complying with $i \in [n]$ and $v \in A_i$, we then infer that this number $a$ is not smaller than $n\lceil \frac{m}{2d+1}\rceil$. On the other hand, every vertex $v$ from $S$ occurs in exactly $2d+1$ such pairs and then $a = |S|\, (2d+1)$.  These observations result in the desired inequality 
  \begin{equation*} \label{eq:cycles-tight}
      \gamma_d^p(C_m \boxtimes C_n) = |S| =\frac{a}{2d+1} \ge  \frac{n}{2d+1}\left\lceil \frac{m}{2d+1}\right\rceil.
  \end{equation*}
 \medskip

(ii) If $n\equiv 0 \pmod{2d+1}$, then $\frac{n}{2d+1}= \lceil \frac{n}{2d+1} \rceil$ and therefore, the lower bound in part (i) and the upper bound in Theorem~\ref{thm:upper-for-strong} coincide. This implies the statement.
\medskip


  (iii) The condition $(1-\frac{r}{2d+1}) \left\lceil \frac{m}{2d+1} \right\rceil<1$ implies $r \neq 0$ and therefore, $\lceil \frac{n}{2d+1}\rceil = \frac{n+2d+1-r}{2d+1}$. 
  Since $\gamma_d^p(C_m \boxtimes C_n)$ and $\lceil \frac{m}{2d+1}\rceil \lceil \frac{n}{2d+1}\rceil$ are integers, they are equal whenever the lower bound  from (i) and the upper bound from Theorem~\ref{thm:upper-for-strong} satisfy
  \begin{align*}
        1 &> \left\lceil \frac{n}{2d+1}\right\rceil \left\lceil \frac{m}{2d+1}\right\rceil -
      \frac{n}{2d+1} \left\lceil \frac{m}{2d+1}\right\rceil\\
      &= \left(\frac{n+2d+1-r}{2d+1}- \frac{n}{2d+1}\right)\left\lceil \frac{m}{2d+1}\right\rceil\\
      &= \left(1-\frac{r}{2d+1}\right) \left\lceil \frac{m}{2d+1} \right\rceil.
  \end{align*}
  As our condition in (iii) corresponds to this inequality, $ \gamma_d^p(C_m \boxtimes C_n) =\lceil \frac{m}{2d+1}\rceil \lceil \frac{n}{2d+1}\rceil$ follows. For $r=2d$, the condition gives $\frac{1}{2d+1} \lceil \frac{m}{2d+1} \rceil <1$ which is equivalent to $\lceil \frac{m}{2d+1} \rceil \leq 2d  $ and hence to $m \leq 4d^2+2d$. A similar reasoning proves the statement for $r=2d-1$.  
\qed

\section{Concluding results and problems}
\label{sec:conclude}

By Theorem~\ref{thm:C11(t+1)}, the difference $\gamma_d^p(G) \gamma_d^p(H) - \gamma_d^p(G\boxtimes H)$ over all connected graphs $G$ can be equal to $2$. We next prove that the difference can be arbitrary. 

\begin{theorem}
\label{thm:arbitrary-difference}    
The difference $\gamma_2^2(G) \gamma_2^2(H) - \gamma_2^2(G\boxtimes H)$ can be an arbitrarily large integer, even if $G$ and $H$ are connected graphs. In particular, this also holds when one factor is required to be a cycle.
\end{theorem}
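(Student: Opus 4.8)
The plan is to take the single ``gap~$2$'' torus furnished by Theorem~\ref{thm:C11(t+1)} in the case $t=0$, namely $\gamma_2^2(C_{11}\boxtimes C_{11})\le 7$ against $\gamma_2^2(C_{11})\,\gamma_2^2(C_{11})=9$, and to \emph{amplify} the difference by taking a strong product with an arbitrarily long path. Concretely, for an integer $m\ge 1$ I would set
$$G = P_m\boxtimes C_{11}\qquad\text{and}\qquad H = C_{11},$$
so that $G$ is connected, $H$ is a cycle, and therefore this one family settles both the general ``connected'' assertion and the ``in particular'' assertion at once. The mechanism is that, by commutativity and associativity of the strong product, $G\boxtimes H\cong P_m\boxtimes(C_{11}\boxtimes C_{11})$, which lets the $7$-vertex efficiency of $C_{11}\boxtimes C_{11}$ be transported along the $P_m$-direction via Theorem~\ref{thm:upper-for-strong}, while on the other side Theorem~\ref{thm:equality-grids} guarantees that $\gamma_2^2(G)$ stays at its extremal value $3\lceil m/5\rceil$ and does \emph{not} also drop.

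The steps, in order, would be as follows. First, record the inputs: $\gamma_2^2(C_{11})=3$ by Theorem~\ref{thm:cycles} (here $d=p=2$, $n=11\ge 6$, $p\le 2d$, and $\frac{11}{p+1}=\frac{11}{3}\ge 3=\lceil\frac{11}{5}\rceil$), $\gamma_2^2(P_m)=\lceil m/5\rceil$ by Proposition~\ref{prop:path} (since $p=2\le 2d=4$), and $\gamma_2^2(C_{11}\boxtimes C_{11})\le 7$ by Theorem~\ref{thm:C11(t+1)} with $t=0$, which gives $d=\lfloor\frac{5}{2}\rfloor=2$ and $p=2$. Second, since $\gamma_2^2(C_{11})<\infty$, the prism part of Theorem~\ref{thm:equality-grids} applies and yields $\gamma_2^2(G)=\gamma_2^2(P_m\boxtimes C_{11})=\lceil m/5\rceil\lceil 11/5\rceil=3\lceil m/5\rceil$, hence $\gamma_2^2(G)\,\gamma_2^2(H)=9\lceil m/5\rceil$. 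Third, combine Theorem~\ref{thm:upper-for-strong} with the two values above:
$$\gamma_2^2(G\boxtimes H)=\gamma_2^2\big(P_m\boxtimes(C_{11}\boxtimes C_{11})\big)\le \gamma_2^2(P_m)\cdot\gamma_2^2(C_{11}\boxtimes C_{11})\le 7\lceil m/5\rceil.$$
Fourth, subtract: both quantities are finite integers, and
$$\gamma_2^2(G)\,\gamma_2^2(H)-\gamma_2^2(G\boxtimes H)\ \ge\ 9\lceil m/5\rceil-7\lceil m/5\rceil\ =\ 2\lceil m/5\rceil,$$
which exceeds any prescribed bound once $m$ is large enough; e.g.\ taking $m=5k$ gives a difference of at least $2k$.

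I do not anticipate a genuine obstacle here: the argument is essentially a bookkeeping of results already established in the paper, and the only care needed is to check that every hypothesis is met — the finiteness condition $\gamma_2^2(C_{11})<\infty$ required by Theorem~\ref{thm:equality-grids}, the parameter constraint $p\le 2d$ in Proposition~\ref{prop:path} and Theorem~\ref{thm:equality-grids}, and that $t=0$ is an admissible value in Theorem~\ref{thm:C11(t+1)}. The one non-routine idea is the realization that a \emph{fixed} torus with a positive gap can be multiplied against an arbitrarily long path without the left-hand side of the bound shrinking, precisely because Theorem~\ref{thm:equality-grids} pins $\gamma_2^2(P_m\boxtimes C_{11})$ to its extremal value. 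Should a self-contained proof be preferred, one could instead build a $2$-distance $2$-packing set of $P_m\boxtimes C_{11}\boxtimes C_{11}$ of size $7\lceil m/5\rceil$ directly, by placing in each ``slab'' $\{5i-2\}\times(C_{11}\boxtimes C_{11})$ a copy of the $7$-element set $X$ used in the proof of Theorem~\ref{thm:C11(t+1)}; and for a symmetric example not involving a cycle one may take $G=H=P_m\boxtimes C_{11}$, which by the same reasoning produces a difference of at least $2\lceil m/5\rceil^2$.
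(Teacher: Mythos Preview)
Your argument is correct and takes a genuinely different route from the paper's. The paper builds an explicit family $G_k$ by gluing $k$ copies of $C_{11}$ along a common edge, proves by hand that $\gamma_2^2(G_k)=2k$, and then exhibits a concrete $2$-distance $2$-packing set of size $5k+3$ inside $G_k\boxtimes C_{11}$, yielding a gap of at least $k-3$. Your approach instead \emph{reuses} three theorems already in the paper: the single torus gap from Theorem~\ref{thm:C11(t+1)}, the exact prism formula from Theorem~\ref{thm:equality-grids}, and the general upper bound of Theorem~\ref{thm:upper-for-strong}, glued together via associativity of the strong product. The only care needed, which you noted, is that Theorem~\ref{thm:equality-grids} pins $\gamma_2^2(P_m\boxtimes C_{11})$ to its extremal value so that the left side cannot also drop; everything else is bookkeeping. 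Your proof is shorter, avoids any new explicit construction, and even produces a larger gap ($2\lceil m/5\rceil$ versus $k-3$). What the paper's approach buys in exchange is self-containment---it does not rely on Theorem~\ref{thm:equality-grids}---and a base graph $G_k$ that is not itself a strong product, which some readers may find a more ``intrinsic'' witness.
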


\begin{proof}
Let $G_k$ be obtained from $k\ge 2$
 vertex disjoint $11$-cycles by gluing them together at two adjacent vertices. Formally, $G_k$ is defined on the vertex set $$V(G_k)=\{v_1, v_2\} \cup \{ v_i^j \colon  i\in [3,11], \enskip j \in [k]\}$$ such that $v_1v_2v_3^j\dots v_{11}^jv_1$ is an $11$-cycle for each $j \in [k]$. We state first that $\gamma_2^2(G_k)=2k$. Indeed, every cycle $C_{11}^j$ contains five consecutive vertices, namely $v_5^j, \dots, v_{9}^j$, which can be $2$-distance dominated only by vertices that belong to $C_{11}^j$ and no other cycles. Supposing that $V(C_{11}^j) \setminus \{v_1,v_2\}$ contains only one vertex from a $2$-distance $2$-packing set of $G_k$, this vertex must be $v_7^j$. However, in this case, $v_4^j$ and $v_{10}^j$ would be $2$-distance dominated by $v_1$ and $v_2$, respectively, and the set would not be a $2$-packing. This contradiction proves $\gamma_2^2(G_k) \ge 2k$. For the other direction, observe that 
 $$ S=\{ v_5^1, v_{10}^1\} \cup 
 \{v_4^j, v_9^j \colon j \in [2,k]\}
 $$ 
is a $2$-distance $2$-packing set in $G_k$ and contains $2k$ vertices. Thus $\gamma_2^2(G_k)=2k$ for every $k \ge 2$. We also know that $\gamma_2^2(C_ {11})=3$.

Consider now the product $F_k= G_k \boxtimes C_{11}$. For each cycle $C_{11} ^j$ in $G_k$, take the strong torus $C_{11}^j \boxtimes C_{11}$ and apply the construction described in the proof of Theorem~\ref{thm:torus-minus-1} to get a $2$-distance $2$-packing set $S^j$ of size $8$ in $C_{11}^j \boxtimes C_{11}$. We can rotate it such that the layers $^{v_1}{C_{11}}$ and $^{v_2}{C_{11}}$ together contain three vertices from $S$. Formally, for every $j \in [k]$, let
$$ S^j=\{(v_1, 3), (v_1,7), (v_2,10), (v_4^j,5), (v_5^j,11), (v_7^j,3), (v_7^j,8), (v_{10}^j,11)\}.
$$
It is clear that $S'= \bigcup_{j=1}^k S^j$ is a $2$-distance dominating set in $F_k$. The distance between any two different vertices $x,y \in S^j$ is $d_{F_k}(x,y) \ge 3$. It can also be checked directly that for every vertex $x \in S^j \setminus \{ (v_1, 3), (v_1,7), (v_2,10)\} $, the  distance between $x$ and the layers $^{v_1}{C_{11}}$ and $^{v_2}{C_{11}}$ is at least two. We conclude that if $x \in S^j \setminus S^{j'}$ and $y \in S^{j'} \setminus S^j$, then $d_{F_k}(x,y) \ge 4$. Therefore, $S'$ is a $2$-packing.
 
Since $S^j \cap S^{j'}=\{(v_1, 3), (v_1,7), (v_2,10)\}$ for every $j \neq j'$, we have $|S'|= 3+5k$. Consequently,
$$\gamma_2^2(G_k \boxtimes C_{11}) \leq 5k+3, 
$$
while $ \gamma_2^2(G_k) \gamma_2^2(C_{11})=3 \cdot 2k = 6k$. Therefore, the difference is not smaller than $k-3$, which proves the statement.
\end{proof}

\begin{remark}  \label{rmk:large-difference-gen}
The statement of Theorem~\ref{thm:arbitrary-difference} can be extended by proving that $\gamma_d^d(G) \gamma_d^d(H) - \gamma_d^d(G\boxtimes H)$ can be arbitrarily large for each $d \ge 2$.  
In this more general proof we assume that $d$ is fixed and define the graph $G_k$, for $k \ge 2$, which is obtained from $k$ cycles $C_{4d+3}^j \colon v_1v_2 v_3^j\dots v_{4d+3}^j v_1$ ($j \in [k])$ such that $V(C_{4d+3}^j) \cap V(C_{4d+3}^{j'})= \{v_1, v_2\} $ whenever $j \neq j'$. One can show, analogously to the proof of Theorem~\ref{thm:arbitrary-difference}, that $\gamma_d^d(G_k)=2k$, for $k \ge 2$. Then, for every $j \in [k]$, we consider the set
\begin{align*} S^j&=\{(v_1, d+1), (v_1,3d+1), (v_2,4d+2), (v_{d+2}^j,2d+1), (v_{d+3}^j,4d+3)\}\\  &\cup \{(v_{2d+3}^j,d+1), (v_{2d+3}^j,3d+2), (v_{3d+4}^j,4d+3)\}.
 \end{align*}
Then $S'= \bigcup_{j=1}^k S^j$ is a $d$-distance $d$-packing set in $G_k \boxtimes C_{4d+3}$, and the proof can be finished with
\begin{align*} \gamma_d^d(G_k \boxtimes C_{4k+3}) &\leq |S'| =3+5k = 3 \cdot 2k - (k-3)
= \gamma_d^d(G_k) \gamma_d^d(C_{4k+3})-(k-3).
 \end{align*}
\end{remark}

In view of Theorem~\ref{thm:arbitrary-difference} and Remark~\ref{rmk:large-difference-gen}, one may ask whether there are graphs such that $\gamma_d^p(G) \gamma_d^p(H)= \infty$ and $\gamma_d^p(G\boxtimes H)$ is finite. Conjecture~\ref{con:infinite} posed in Section~\ref{sec:conjecture} claims the non-existence of such graphs. Here we recall this conjecture, which may be considered the most important open problem related to our topic.

\paragraph{Conjecture~\ref{con:infinite}.} If $G$ is a graph, and $d$ and $p$ are integers such that  $\gamma_d^p(G) = \infty$, then $\gamma_d^p(G\boxtimes H) = \infty$ for every graph $H$.
\medskip

The results of Section~\ref{sec:toruses} suggest our next open problem: 

\begin{problem}
\begin{enumerate}
\item[(i)] For exactly which values of $d$, $p$, $m$, and $n$, the equality $ \gamma_d^p(C_m \boxtimes C_n) = \gamma_d^p(C_m)\, \gamma_d^p(C_n)$ holds? 
\item[(ii)] Is the difference $\gamma_d^p(C_m)\, \gamma_d^p(C_n) - \gamma_d^p(C_m \boxtimes C_n)$ bounded by a constant (possibly $2$) over all values of $d$, $p$, $m$, and $n$?
\end{enumerate}
\end{problem}

In view of Theorem~\ref{thm:equality-grids} we also pose: 

\begin{problem}
Investigate the behavior of $\gamma_d^p(T\boxtimes T')$ where $T$ and $T'$ are arbitrary trees.     
\end{problem}

We conclude the paper with the following unfortunate situation. Since $\gamma_k(G) \le \gamma_k^p(G)$, $p\ge 0$ by Proposition~\ref{prop:lower-bound-k-domination}, we infer that Theorems~\ref{thm:C11(t+1)} and~\ref{thm:torus-minus-1} form counterexamples to~\cite[Theorem 3.3]{anand-2025} which wrongly claims that $\gamma_k(G\boxtimes H) = \gamma_k(G) \gamma_k(H)$. The fundamental error made in the ``proof'' is the claim that if $S_1\subseteq V(G)$, $S_2\subseteq V(H)$, and $S\subseteq V(G\boxtimes H)$ is a set with $|S| < |S_1 \times S_2|$, then $|p_G(S)| < |S_1|$ or $|p_H(S)| < |S_2|$. The paper~\cite{anand-2025} makes the same error for the Cartesian product,~\cite[Theorem 2.5]{anand-2025} gives a short ``proof'' of $\gamma_k(G\cp H) \ge \gamma_k(G)\gamma_k(H)$. If the ``proof'' worked, then  this would mean (by selecting $k=1$) the solution of the celebrated Vizing's conjecture~\cite{bresar-2012, zhao-2022}. Further,~\cite[Theorem 4.3]{anand-2025}  claims that $\gamma_k(G\circ H) = \gamma_k(G)$, where $\circ$ is the lexicographic product. This result is again wrong, for instance, $\gamma(P_6\circ P_4) = 4$, cf.~\cite{zhang-2011}. 

\section*{Data Availability Statement}
 
Data sharing is not applicable to this article as no new data were created or analyzed in this study.

\section*{Conflict of Interest Statement}

Sandi Klav\v{z}ar is a Honorary Board Editor of the European Journal of Combinatorics and was not involved in the review and decision-making process of this article. In addition, the authors declare no other conflict of interest.

\section*{Acknowledgments}

Csilla Bujt\'{a}s, Vesna Ir\v{s}i\v{c}, and Sandi Klav\v{z}ar were supported by the Slovenian Research and Innovation Agency (ARIS) under the grants P1-0297, N1-0285, N1-0355, N1-0431, and  Z1-50003.  Vesna Ir\v{s}i\v{c} also acknowledges the financial support from the European Union (ERC, KARST, 101071836).

\end{document}